\UseRawInputEncoding
\documentclass{article}
\usepackage[affil-it]{authblk}
\usepackage[utf8]{inputenc}
\usepackage[english]{babel}
\usepackage{amsmath,amssymb,amsthm,amsfonts,bm,cite}
\newtheorem{thm}{Theorem}[section]
\newtheorem{theorem}[thm]{Theorem}

\newtheorem{definition}{Definition}[section]

\newtheorem{rem}{Remark}[section]
\newtheorem{corollary}{Corollary}[section]

\usepackage{graphicx}

\title{Measuring Abundance with Abundancy Index}

\begin{document}
%        % for the preliminaries
%
\pagestyle{headings}  % switches on printing of running heads

\title{Measuring Abundance with Abundancy Index}
%% abbreviated title (for running head)
%                                     also used for the TOC unless
%                                     \toctitle is used
%
\author{Kalpok Guha%
  \thanks{Email address: \texttt{kalpok.guha@gmail.com}; Corresponding author}}
\affil{Presidency University, Kolkata}

\author{Sourangshu Ghosh%
  \thanks{Email address: \texttt{sourangshug123@gmail.com}}}
\affil{Indian Institute of Technology Kharagpur, India}

\date{}
\maketitle 

\noindent 
\begin{abstract} A positive integer \(n\) is called perfect if \(\sigma(n)=2n\), where $\sigma (n)$ denote the sum of divisors of $n$. In this paper we study the ratio \(\frac{\sigma(n)}{n}\). We define the function Abundancy Index \(I:\mathbb{N} \to \mathbb{Q}\) with \(I(n)=\frac{\sigma(n)}{n}\). Then we study different properties of Abundancy Index and discuss the set of Abundancy Index. Using this function we define a new class of numbers known as superabundant numbers.  Finally we study superabundant numbers and their connection with Riemann Hypothesis.
\end{abstract}

\section{Introduction}
\begin{definition}
A positive integer $n$ is called perfect if $\sigma (n)=2n$, where $\sigma (n)$ denote the sum of divisors of $n$.
\end{definition}

The first few perfect numbers are $6,28,496,8128,...$ (OEIS A000396), This is a well studied topic in number theory.  Euclid studied properties and nature of perfect numbers in 300 BC. He proved that if $2^p-1$ is a prime, then $2^{p-1}(2^p-1)$ is an even perfect number(Elements, Prop. IX.36). Later mathematicians have spent years to study the properties of perfect numbers. But still many questions about perfect numbers remain unsolved. Two famous conjectures related to perfect numbers are 
\begin{enumerate}
    \item There exist infinitely many perfect numbers. Euler\cite{Euler} proved that a number is an even perfect numbers iff it can be written as $2^{p-1}(2^p-1)$ and $2^p-1$ is also a prime number. Primes numbers of the form $2^p-1$ are known as Mersenne primes. Therefore this conjecture is equivalent to the conjecture that there exist infinitely many Mersenne primes. Some good references on this topic are \cite{Gerstein}, \cite{Caldwell},  \cite{Travaglini}.
    \item There do not exist exist any odd perfect numbers. Computation of Lower Bounds for the smallest perfect numbers have been done by many mathematicians. Kanold (1957)\cite{Kanold} found the bound $10^{20}$, Tuckerman (1973) \cite{Tuckerman} found the bound $10^{36}$, Hagis (1973) \cite{Hagis}found the bound $10^{50}$, Brent and Cohen (1989) \cite{Brent1} found the bound $10^{160}$, Brent et al. (1991) \cite{Brent2} found the bound $10^{300}$. The best bound till today is $10^{1500}$ by Ochem and Rao (2012)\cite{Ochem}. The odd perfect numbers if exist must be of the form $p^{4\lambda +1}Q^2$, where $p$ is a prime of the form $4n+1$ as proven by Euler\cite{Burton}\cite{Weisstein}.Touchard\cite{Touchard} and Holdener\cite{Holdener} proved that the odd perfect numbers if exist must be of the form $12k+1$ or $36k+1$. Stuyvaert\cite{Dickson} proved that the odd perfect numbers if exist must be must be a sum of two squares. Greathouse and Weisstein\cite{Greathouse} alternatively writes that any odd perfect number must be of the form
    $$N=p^{\alpha}{q_1}^{2\beta_1}...{q_r}^{2\beta_r}$$
    where all the primes are odd. Also $p\equiv\alpha\equiv1(\bmod 4)$. Steuerwald\cite{Steuerwald} and Yamada\cite{Yamada}  proved that all the $\beta_i$s cannot be 1. Odd perfect numbers have a large number of distinct prime factors. The odd perfect number if exist must have at least 6 distinct prime factors, as proved by Gradshtein\cite{Ball}. This was extended to 8 by Haggis\cite{Haggis1}. If there are 8 the number must be divisible by 15, as proved by Voight \cite{Voight}. Norton\cite{Norton} proved that odd perfect numbers must have at least 15 and 27 distinct prime factors if the number is not divisible by 3 or 5 and 3, 5, or 7 respectively. Neilsen\cite{Neilsen} extended the bound by showing that odd perfect numbers should have at least 9 distinct prime factors and if it is not divisible by 3 it should have at least 12 distinct prime factors. Hare\cite{Hare} shown that any odd perfect number must have at least 75 prime factors. The method used by Hare involves factorization of several large numbers\cite{Weisstein}\cite{Hare}.The best lower bound is by Ochem and Rao (2012)\cite{Ochem}, who prove that any odd perfect number must have at least 101 prime factors. Odd perfect numbers must have the largest prime factor very large. The first such lower bound was proved by Haggis\cite{Haggis2}, who proved every Odd Perfect Number has a Prime Factor which exceeds $10^{6}$. Iannucci\cite{Iannucci1}\cite{Iannucci2}, Jenkins\cite{Jenkins}, Goto and Ohno\cite{Goto} proved that the largest three factors must be at least 100000007, 10007, and 101\cite{Weisstein}.
\end{enumerate}
Two other related concepts are abundant numbers and deficient numbers.
\begin{definition}
A positive integer $n$ is called an abundant number if $\sigma(n)>2n$.
\end{definition}
\begin{definition} 
A positive integer $n$ is called a deficient number if $\sigma(n)<2n$.
\end{definition}
To study these interesting properties of these beautiful numbers we define \textbf{Abundancy Index}. That was defined by Laatsch\cite{Laat}.
\begin{definition} For a positive integer $n$, the Abundancy index $I(n)$ is defined as $I(n)=\frac{\sigma(n)}{n}$.
\end{definition}
 More generally Abundancy Index can be considered as a measure of perfection of an integer. We can easily observe a positive integer is perfect when $I(n)=2$ and $n$ is abundant or deficient when $I(n)>2$ or $I(n)<2$ respectively. Positive integers with integer valued Abundancy indices are called \textbf{multiperfect numbers}. In this article we study different properties about  Abundancy Index and to try generalize the Abundancy index of any positive integer $n$.

\section{Properties}

\begin{theorem} The abundancy index function $I(n)$ is a multiplicative function.
\end{theorem}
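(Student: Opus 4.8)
The plan is to prove that $I(n) = \sigma(n)/n$ is multiplicative by reducing it to the known multiplicativity of $\sigma$. I would first recall the definition: a function $f:\mathbb{N}\to\mathbb{C}$ is multiplicative if $f(mn) = f(m)f(n)$ whenever $\gcd(m,n)=1$. So the goal is to show that for coprime positive integers $m$ and $n$, we have $I(mn) = I(m)I(n)$.

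The key step is to use the fact that $\sigma$ is itself multiplicative, i.e.\ $\sigma(mn) = \sigma(m)\sigma(n)$ when $\gcd(m,n)=1$. I would either cite this as a standard result or include a brief justification: the divisors of $mn$ are in bijection with pairs $(d_1, d_2)$ where $d_1 \mid m$ and $d_2 \mid n$, via $d = d_1 d_2$, and this bijection respects the sum because each such product is distinct and every divisor arises uniquely when $m,n$ are coprime. Hence
\[
\sigma(mn) = \sum_{d \mid mn} d = \Bigl(\sum_{d_1 \mid m} d_1\Bigr)\Bigl(\sum_{d_2 \mid n} d_2\Bigr) = \sigma(m)\sigma(n).
\]

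With that in hand, the computation for $I$ is immediate. For coprime $m,n$,
\[
I(mn) = \frac{\sigma(mn)}{mn} = \frac{\sigma(m)\sigma(n)}{mn} = \frac{\sigma(m)}{m}\cdot\frac{\sigma(n)}{n} = I(m)I(n).
\]
Thus $I$ is multiplicative.

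I do not expect any serious obstacle here, since the result is essentially a one-line consequence of the multiplicativity of $\sigma$ together with the obvious fact that $n \mapsto n$ is (completely) multiplicative and that the ratio of two multiplicative functions is multiplicative. The only point requiring a modicum of care is justifying the divisor bijection for $\sigma$ cleanly, so if the paper has not already established the multiplicativity of $\sigma$, I would spell out that bijection argument; otherwise a direct appeal suffices and the proof collapses to the two displayed equations above.
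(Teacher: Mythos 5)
Your proof is correct and takes essentially the same approach as the paper: both reduce the claim to the multiplicativity of $\sigma$ (which the paper cites to Burton) and conclude via the identical one-line computation $I(mn)=\sigma(mn)/(mn)=\sigma(m)\sigma(n)/(mn)=I(m)I(n)$. Your extra sketch of the divisor bijection proving $\sigma$ multiplicative is a harmless addition beyond what the paper includes.
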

\begin{proof} 
Let \(m,n\) be any two co-prime positive integers. Using the multiplicativity of \(\sigma\) function \cite{Burton}.
$$I(mn)=\frac{\sigma(mn)}{mn}=\frac{\sigma(m)\sigma(n)}{mn}=\frac{\sigma(m)}{m}\frac{\sigma(n)}{n}=I(m)I(n)$$
\end{proof} 

\begin{theorem}{\normalfont (Laatsch\cite{Laat})}: $I(kn) \geq I(n)$ for all $k \in \mathbb{N}$. The equality condition holds iff $k=1$.
\end{theorem}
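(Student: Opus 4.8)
The plan is to reformulate the abundancy index as a sum of reciprocals of divisors, which turns the inequality into an almost trivial monotonicity statement. First I would establish the identity
$$I(n) = \frac{\sigma(n)}{n} = \sum_{d \mid n} \frac{1}{d}.$$
This follows because $\sigma(n) = \sum_{d \mid n} d$, and replacing each divisor $d$ by its complementary divisor $n/d$ (which ranges over exactly the same set of divisors of $n$) gives $\sigma(n)/n = \sum_{d \mid n} (n/d)/n = \sum_{d \mid n} 1/d$.

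With this identity in hand, the inequality becomes a statement about summing positive terms over nested index sets. Since $n \mid kn$ for every $k \in \mathbb{N}$, every divisor of $n$ is also a divisor of $kn$; that is, the divisor set of $n$ is contained in the divisor set of $kn$. Because all summands $1/d$ are strictly positive, enlarging the index set can only increase the sum, so
$$I(kn) = \sum_{d \mid kn} \frac{1}{d} \geq \sum_{d \mid n} \frac{1}{d} = I(n).$$

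For the equality condition I would argue in both directions. If $k = 1$ the two sums are identical, so equality clearly holds. Conversely, if equality holds then the quantity $I(kn) - I(n) = \sum_{d \mid kn,\, d \nmid n} 1/d$ must vanish; since every term in this sum is positive, the sum must be empty, meaning every divisor of $kn$ already divides $n$. In particular $kn$ itself divides $n$, forcing $kn \leq n$ and hence $k = 1$.

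The argument is short once the reciprocal-sum identity is in place, so I expect the only real work to be in setting up that identity cleanly; the monotonicity and the equality case then follow with no computation. One could alternatively avoid the identity by observing that $\{kd : d \mid n\}$ is a collection of distinct divisors of $kn$, which yields $\sigma(kn) \geq k\,\sigma(n)$ directly, with strictness for $k > 1$ because the divisor $1$ of $kn$ is omitted from that collection; but the reciprocal-sum formulation is the cleaner route.
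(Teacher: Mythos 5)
Your proof is correct. Note that the paper itself states this theorem without proof (quoting Laatsch), so there is nothing to diverge from; moreover, the key identity you establish, $I(n)=\sum_{d\mid n}\frac{1}{d}$, is precisely the paper's equation (1), recorded just after this theorem and reused in its proof that $I$ is unbounded, so your argument --- divisors of $n$ form a subset of the divisors of $kn$, with the divisor $kn\nmid n$ for $k>1$ forcing strict inequality --- is exactly the proof the paper's own machinery suggests, and your alternative sketch via $\sigma(kn)\geq k\,\sigma(n)+1$ for $k>1$ is equally valid.
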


\begin{corollary} Every proper multiple of a perfect number is abundant and every proper divisor of a perfect number is deficient.
\end{corollary}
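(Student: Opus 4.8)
The plan is to derive both statements directly from the preceding theorem of Laatsch, which asserts $I(kn) \geq I(n)$ with equality precisely when $k = 1$. The whole argument rests on translating the definitions of abundant and deficient into the language of the abundancy index: a number is abundant exactly when its index exceeds $2$, deficient when its index is below $2$, and a perfect number $n$ is characterised by $I(n) = 2$. So the task reduces to producing a strict inequality $I(\cdot) > 2$ in one case and $I(\cdot) < 2$ in the other, and each will come from the strict (equality-excluded) form of the monotonicity bound.

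For the first claim I would let $n$ be perfect and let $m$ be a proper multiple of $n$, so that $m = kn$ for some integer $k \geq 2$. Applying Laatsch's theorem with this $k$ gives $I(m) = I(kn) \geq I(n) = 2$, and since $k \neq 1$ the equality case is excluded, forcing the strict inequality $I(m) > 2$. By definition $m$ is then abundant.

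For the second claim the key move is to apply the very same theorem in the opposite direction. Let $d$ be a proper divisor of the perfect number $n$, so that $n = kd$ with $k \geq 2$. I now read the inequality with $n$ playing the role of the multiple of $d$: Laatsch's theorem gives $I(n) = I(kd) \geq I(d)$, and again $k \neq 1$ rules out equality, so $2 = I(n) > I(d)$, i.e. $I(d) < 2$. Hence $d$ is deficient.

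I expect no real obstacle here, since the statement is an immediate consequence of the strict form of the monotonicity inequality. The only point requiring a moment's care is the second half, where one must recognise that the proper divisor $d$ is the ``$n$'' and the perfect number is the ``$kn$'' of Laatsch's theorem, rather than attempting to bound $I(d)$ from above directly; once the roles are assigned correctly the conclusion is forced.
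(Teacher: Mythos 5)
Your proof is correct and follows exactly the route the paper intends: the corollary is stated immediately after Laatsch's theorem (Theorem 2) precisely because both halves follow from its strict form, with the perfect number playing the role of $n$ for multiples and of $kn$ for divisors, just as you do. The paper gives no written proof, but your argument, including the careful role-reversal in the divisor case, is the evident intended one.
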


\begin{corollary} 
There are infinitely many abundant numbers.
\end{corollary}
\begin{rem}  It is easy to see that there are infinitely many deficient numbers. Indeed, all prime numbers are
deficient, as $\sigma(p)=p+1 < 2p$.
\end{rem}

\begin{equation}
 I(n)=\sum_{d|n}\frac{1}{d},  I(n)=\frac{\sigma(n)}{n}=\frac{1}{n}\sum_{d|n}d=\frac{1}{n}\sum_{d|n}\frac{n}{d}=\sum_{d|n}\frac{1}{d}   
\end{equation}

\begin{theorem}{\normalfont (Laatsch\cite{Laat})}: The $I(n)$ is function is unbounded. 
\end{theorem}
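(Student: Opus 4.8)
The plan is to exhibit an explicit sequence of integers along which $I$ grows without bound, using the series representation $I(n)=\sum_{d\mid n}\frac{1}{d}$ established in equation (2.1). The cleanest choice is the factorials $n=N!$: since every integer $k$ with $1\le k\le N$ is a divisor of $N!$, the sum over all divisors of $N!$ dominates the partial sum of the harmonic series up through $N$, and the harmonic series is the classical example of a divergent series.

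Concretely, I would first record the inclusion $\{1,2,\dots,N\}\subseteq\{d:d\mid N!\}$, which is immediate because $N!=1\cdot 2\cdots N$ is divisible by each of its factors. Dropping all the remaining (positive) terms of the divisor sum then gives
$$I(N!)=\sum_{d\mid N!}\frac{1}{d}\ \ge\ \sum_{k=1}^{N}\frac{1}{k}=H_N,$$
where $H_N$ denotes the $N$-th harmonic number. Next I would invoke the divergence of the harmonic series: given any real bound $M>0$, there exists $N$ with $H_N>M$, and for that $N$ we obtain $I(N!)\ge H_N>M$. Since $M$ was arbitrary, $I$ attains arbitrarily large values and is therefore unbounded.

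The main, and essentially only, obstacle is the divergence of $\sum_{k\ge 1}\frac{1}{k}$, which I would either cite as standard or recall in one line via the dyadic blocking estimate $\sum_{k=2^{j}+1}^{2^{j+1}}\frac{1}{k}\ge\frac{1}{2}$, so that $H_{2^{m}}\ge 1+\frac{m}{2}\to\infty$. Everything else in the argument is routine. An alternative route would instead use the primorials $n=p_1p_2\cdots p_k$ together with the multiplicativity of $I$ (Theorem 2.1), giving $I(n)=\prod_{i=1}^{k}\bigl(1+\tfrac{1}{p_i}\bigr)$ and reducing unboundedness to the divergence of $\sum 1/p_i$ over the primes; but this is heavier than necessary, so I would prefer the factorial argument, which needs only the divisor-sum identity already at hand.
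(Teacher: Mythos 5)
Your proof is correct and is essentially the paper's first proof: you take $n=N!$ where the paper takes $n=\operatorname{lcm}(1,2,\dots,N)$, but in both cases the point is that every integer up to $N$ divides $n$, so $I(n)=\sum_{d\mid n}\frac{1}{d}\ge H_N$ and divergence of the harmonic series finishes the argument. Your sketched primorial alternative is likewise the paper's second proof, so nothing here departs from the source.
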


\begin{proof} 
We discuss two proofs of this theorem. The first proof goes like this \\\\
Let $m$ be any real number. We know the series $\sum_{i=1}^{\infty}\frac{1}{i}$ is divergent. Hence for given $m$ $\exists N \in \mathbb{N} \backepsilon \sum_{i=1}^{N} \frac{1}{i} > m$. Let us take $n_{0}=$~lcm$(1,2,\cdots,N)$. Hence we get $I(n_{0})=\sum_{d|n_0}\frac{1}{d} \geq \sum_{i=1}^{N} \frac{1}{i}$. Thus for any real $m  \exists n_{0} \in \mathbb{N} \backepsilon I(n_{0})>m$. Therefore $I(n)$ is not bounded above.\\\\ 
The second proof goes like this \\

For $n_0=2\cdot3\cdots p_k=\prod_{i=1}^{k}p_i$
i.e the product of first $k$ primes.
Therefore $$I(n_0)=\prod_{i=1}^{k}(1+\frac{1}{p_i})>\sum_{i=1}^{k}\frac{1}{p_i}$$. Or, $I(n_0)>\sum_{i=1}^{k}\frac{1}{p_i}$. Now the series $\sum_{prime} \frac{1}{p}$ is divergent, as proven by Euler\cite{Euler1}. Hence we can say $I(n)$ is not bounded above.
\end{proof} 

\begin{theorem} For any \(r \in \mathbb{R}\) there are infinitely many \(n\) such that \(I(n)>r\).
\end{theorem}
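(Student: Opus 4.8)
The plan is to obtain the result as an immediate consequence of the two preceding theorems, namely the unboundedness of $I$ established just above, together with Laatsch's monotonicity inequality $I(kn)\ge I(n)$. The unboundedness supplies a single integer on which $I$ exceeds any prescribed bound, and the inequality then upgrades that single witness into an infinite family simply by passing to its multiples.

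First I would fix an arbitrary $r\in\mathbb{R}$. Since $I$ is unbounded above, there exists some $n_0\in\mathbb{N}$ with $I(n_0)>r$. Next I would invoke Laatsch's inequality: for every integer $k\ge 2$ we have $I(kn_0)\ge I(n_0)$, and in fact the inequality is strict because equality holds only when $k=1$. Hence $I(kn_0)>I(n_0)>r$ for all $k\ge 2$. The integers $2n_0,3n_0,4n_0,\dots$ are pairwise distinct and infinite in number, so they furnish infinitely many $n$ with $I(n)>r$, as required.

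I do not expect any genuine obstacle here, since the statement is essentially a corollary of the two Laatsch theorems; the only point meriting attention is that we must exhibit infinitely many witnesses rather than a single one, which is exactly what closing under multiples accomplishes. If one prefers a self-contained argument that avoids citing the monotonicity inequality, I would instead reuse the primorial construction from the second proof of the unboundedness theorem: writing $n_k=\prod_{i=1}^{k}p_i$, one has $I(n_k)=\prod_{i=1}^{k}\bigl(1+\tfrac{1}{p_i}\bigr)$, and since each factor exceeds $1$ the sequence $I(n_k)$ is strictly increasing in $k$ while diverging, because it dominates $\sum_{i=1}^{k}\tfrac{1}{p_i}$. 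Thus $I(n_k)>r$ for all sufficiently large $k$, producing infinitely many distinct $n_k$. In either route the final check is merely that the selected integers are genuinely distinct, which is clear by construction.
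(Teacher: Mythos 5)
Your proposal is correct and follows essentially the same route as the paper: use the unboundedness theorem to produce a single witness $n_0$ with $I(n_0)>r$, then apply Laatsch's inequality $I(kn_0)\geq I(n_0)$ to all multiples $kn_0$ to obtain infinitely many witnesses. Your added remarks (strictness for $k\geq 2$, distinctness of the multiples, and the alternative primorial argument) are sound but inessential refinements of the same argument.
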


\begin{proof}  By Theorem 3 we see for any $r \in \mathbb{R}\exists n_0 \in \mathbb{N}$ such that $I(n_0)>r$. By using Theorem 2 we get $I(kn_0) \geq I(n_0)$ for any positive integer $k$. Therefore $I(kn_0) > r \forall k \in  \mathbb{N}$. As there are infinitely many choices for $k$, there are infinitely many $n$ such that $I(n)>r$.
\end{proof} 

\begin{theorem} 
If $n=\prod_{i=1}^{k}p_{i}^{\alpha_i}$ where the $p_i$ are distinct primes, then $\prod_{i=1}^{k}\frac{p_i+1}{p_i} \leq I(n) \leq \prod_{i=1}^{k}\frac{p_i}{p_i-1}$
\end{theorem}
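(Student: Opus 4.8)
The plan is to reduce the statement to the case of prime powers via multiplicativity, and then bound each prime-power factor by comparing a finite geometric sum against a truncation from below and against its infinite completion from above.

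First I would invoke Theorem 1, the multiplicativity of $I$, together with the fact that the $p_i^{\alpha_i}$ are pairwise coprime, to write $I(n)=\prod_{i=1}^{k} I(p_i^{\alpha_i})$. This reduces the task to establishing, for a single prime power, the two-sided bound $\frac{p+1}{p}\le I(p^{\alpha})\le \frac{p}{p-1}$. Once this per-factor estimate is in hand, the claimed inequalities follow by taking the product over $i=1,\dots,k$; since every factor involved is strictly positive, the product preserves both directions of the inequality.

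Next I would use the identity $I(m)=\sum_{d\mid m}\frac{1}{d}$ recorded earlier in the excerpt to write $I(p^{\alpha})=\sum_{j=0}^{\alpha}p^{-j}=1+\frac{1}{p}+\cdots+\frac{1}{p^{\alpha}}$, a finite geometric sum. For the \emph{lower} bound I would simply discard every term beyond $j=1$; as each discarded term is nonnegative, this yields $I(p^{\alpha})\ge 1+\frac{1}{p}=\frac{p+1}{p}$, with equality precisely when $\alpha=1$. For the \emph{upper} bound I would extend the finite sum to the full geometric series, obtaining $I(p^{\alpha})<\sum_{j=0}^{\infty}p^{-j}=\frac{1}{1-1/p}=\frac{p}{p-1}$; since a strict inequality certainly implies the non-strict bound claimed in the theorem, this is enough.

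I do not anticipate any genuine obstacle here: both per-factor bounds are immediate from comparing a truncated geometric series to its head and to its infinite sum. The only point deserving a line of care is the passage from the per-factor estimates to the full product, namely noting that because all the factors $\frac{p_i+1}{p_i}$, $I(p_i^{\alpha_i})$, and $\frac{p_i}{p_i-1}$ are positive, multiplying the inequalities across $i$ is legitimate and preserves their direction.
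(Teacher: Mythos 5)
Your proposal is correct and follows essentially the same route as the paper's proof: reduce to prime powers via the multiplicativity of $I$, write $I(p^{\alpha})$ as the geometric sum $\sum_{j=0}^{\alpha}p^{-j}$, bound it below by its first two terms and above by the full geometric series $\frac{p}{p-1}$, then multiply the per-factor bounds. Your added remarks on strictness and on positivity justifying the product step are minor refinements of the same argument, not a different approach.
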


\begin{proof}
Consider $p$ to be a prime and $\alpha$ any positive integer.
 Now as proven earlier in (1), we have
 
 $$I(p^{\alpha})=\sum_{d|p^{\alpha}}\frac{1}{d}=1+\frac{1}{p}+\frac{1}{p^2}+\cdots+\frac{1}{p^{\alpha}}$$ By using the inequality
$$1+\frac{1}{p} \leq 1+\frac{1}{p}+\frac{1}{p^2}+\cdots+\frac{1}{p^{\alpha}} \leq \sum_{i=1}^{\infty}\frac{1}{p^i}$$

\noindent We get 
\begin{equation}
\frac{p+1}{p} \leq I(p^\alpha) \leq \frac{p}{p-1}    
\end{equation}

\noindent Now since $I$ is multiplicative function(Theorem 1)
\begin{equation} 
 I(n)=I(\prod_{i=1}^{k}p_{i}^{\alpha_i})=\prod_{i=1}^{k}I(p_{i}^{\alpha_{i}})
\end{equation} 
Using the inequality (1) we get $$\prod_{i=1}^{k}\frac{p_i+1}{p_i} \leq \prod_{i=1}^{k}I(p_{i}^{\alpha_i}) \leq \prod_{i=1}^{k}\frac{p_i}{p_i-1}$$
Using the identity mentioned in (3) 
$$\prod_{i=1}^{k}\frac{p_i+1}{p_i} \leq I(n) \leq \prod_{i=1}^{k}\frac{p_i}{p_i-1}$$
So we get our desired result.
\end{proof}

\section{Set of Abundancy Indices }
As we study the function \(I:\mathbb{N} \rightarrow \mathbb{Q}\), many questions arise. For example, is every rational $q \geq 1$ the Abundancy index of some integer? Many Mathematicians have tried to study the set of Abundancy indices, Laatsch \cite{Laat} shown the set \(D=\{I(n):n \geq 2\}\) is dense in \((1,\infty)\). Later Weiner\cite{Wein} showed there exists rationals which are not the Abundancy index of any integer. In 2007 Stanon and Holdener\cite{Hold} defined Abundancy Outlaw. An Abundancy outlaw is a rational greater than 1 that not an Abundancy index of integer, in other words it is not in the image map of the map $I$.
\begin{theorem}{\normalfont (Laatsch\cite{Laat})}:  \(D=\{I(n):n \geq 2\}\) is dense in \((1,\infty)\).
\end{theorem}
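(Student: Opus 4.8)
The plan is to show that for any nonempty open interval $(a,b) \subseteq (1,\infty)$ there exists an integer $n \ge 2$ with $I(n) \in (a,b)$; equivalently, given a target $r>1$ and a tolerance $\epsilon>0$, I would produce $n$ with $r < I(n) < r+\epsilon$. The engine of the argument is the observation that multiplying by a fresh large prime perturbs the index by a tiny, controllable multiplicative factor: if $p$ is a prime with $p \nmid n$, then multiplicativity (Theorem 1) gives $I(np) = I(n)\,I(p) = I(n)\bigl(1+\tfrac1p\bigr)$, and for large $p$ the factor $1+\tfrac1p$ is arbitrarily close to $1$.

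First I would fix a prime threshold $P$ large enough that $r/P < \epsilon$, and consider only distinct primes $q_1 < q_2 < \cdots$ with $q_i \ge P$. Building $n$ greedily as a squarefree product of such primes, multiplicativity yields $I\!\left(\prod_i q_i\right) = \prod_i \bigl(1+\tfrac1{q_i}\bigr)$. Starting from the empty product (value $1 < r$) I would append primes one at a time, stopping at the first prime $q$ whose inclusion makes the partial product exceed $r$.

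The heart of the proof is guaranteeing that this stopping time is finite, that is, that the partial products actually climb past $r$. This follows from the divergence of the tail product $\prod_{p \ge P}\bigl(1+\tfrac1p\bigr)$, which I would derive from the divergence of $\sum_{p}\tfrac1p$ (Euler, already invoked in Theorem 3) via the estimate $\log\bigl(1+\tfrac1p\bigr) \ge \tfrac1p - \tfrac1{2p^2}$ together with the convergence of $\sum_p \tfrac1{p^2}$. I expect this divergence step, confirming that restricting attention to large primes still leaves enough mass to reach any prescribed height, to be the main obstacle.

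Finally I would bound the overshoot. Writing $n'$ for the product just before the stopping prime $q$, we have $I(n') \le r$ by construction, so $r < I(n'q) = I(n')\bigl(1+\tfrac1q\bigr) \le r\bigl(1+\tfrac1q\bigr) \le r + \tfrac{r}{P} < r+\epsilon$. Thus $n = n'q \ge P \ge 2$ satisfies $I(n) \in (r, r+\epsilon)$, and since $(a,b)$ was arbitrary this establishes that $D$ is dense in $(1,\infty)$.
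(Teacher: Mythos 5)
Your proposal is correct, but there is nothing in the paper to compare it against: the paper states this theorem without proof, simply attributing it to Laatsch, so your argument stands as a self-contained justification rather than a variant of an in-paper one. It is also well matched to the paper's own toolkit, since it uses only the multiplicativity of $I$ (Theorem 2.1), the value $I(q)=1+\frac{1}{q}$ at a prime $q$, and Euler's divergence of $\sum_{p}\frac{1}{p}$, which the paper already invokes in the second proof of the unboundedness theorem. The three load-bearing steps all check out: the stopping time is finite because $\log\bigl(1+\frac{1}{p}\bigr)\ge\frac{1}{p}-\frac{1}{2p^2}$ and $\sum_{p}\frac{1}{p^2}$ converges, so the tail product $\prod_{p\ge P}\bigl(1+\frac{1}{p}\bigr)$ diverges no matter how large the threshold $P$; the appended primes are distinct, so coprimality holds at every step and multiplicativity applies; and the overshoot estimate $r<I(n'q)\le r\bigl(1+\frac{1}{q}\bigr)\le r+\frac{r}{P}<r+\epsilon$ is airtight, including the degenerate case where the very first prime already overshoots (then $n'=1$, $I(n')=1\le r$, and $n=q$ is a single prime $\ge 2$). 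One small bookkeeping point: in reducing density to the target-and-tolerance formulation, you should choose $r$ strictly inside $(a,b)$ (say with $\epsilon=b-r$) so that the case $a=1$ is covered, since your construction needs $r>1$; this is a one-line adjustment and does not affect the substance. Your instinct that the divergence of the tail product would be the main obstacle is reasonable, but as you note it follows routinely from Euler's theorem, so the argument is complete as sketched.
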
 

\begin{definition} A rational number \(q>1\) is  said to be an Abundancy outlaw if \(I(n)=q\) has no solution in \(\mathbb{N}\).
\end{definition}

\begin{theorem}{\normalfont (Wein\cite{Wein})}: If \(k\) is relatively prime to \(m\)  and \(m < k < \sigma(m)\), then \(\frac{k}{m}\) is an Abundancy outlaw. Hence if $r/s$ is an Abundancy index with $\gcd(r, s) = 1$, then $r\geq \sigma(s)$. 
\end{theorem}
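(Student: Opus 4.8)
The plan is to argue by contradiction. Suppose $k/m$ is in lowest terms with $m < k < \sigma(m)$, yet it is nevertheless the abundancy index of some integer, say $I(n) = \sigma(n)/n = k/m$ with $n \in \mathbb{N}$. The first and crucial step is a divisibility observation: cross-multiplying $\sigma(n)/n = k/m$ yields $m\,\sigma(n) = k\,n$, so $m \mid k n$, and since $\gcd(k,m) = 1$ this forces $m \mid n$. Thus $n$ is a multiple of $m$.

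Having placed $n$ into the form $n = m t$, I would then invoke Theorem 2 (Laatsch's monotonicity $I(kn) \ge I(n)$). Because $m \mid n$, we get $I(n) \ge I(m)$, that is
\[
\frac{k}{m} = I(n) \ge I(m) = \frac{\sigma(m)}{m},
\]
whence $k \ge \sigma(m)$. This directly contradicts the hypothesis $k < \sigma(m)$, so no such $n$ can exist and $k/m$ is an abundancy outlaw.

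For the concluding "hence" statement, suppose $r/s$ is an abundancy index with $\gcd(r,s) = 1$, and aim to show $r \ge \sigma(s)$. If $r = s$ then $r/s = 1 = I(1)$ and $\sigma(s) = \sigma(1) = 1 = r$, so the inequality holds with equality. Otherwise $r/s = I(n) > 1$ for some $n > 1$, which gives $s < r$. The contrapositive of the first part states that an abundancy index $k/m$ in lowest terms cannot satisfy $m < k < \sigma(m)$; applying this with $(k,m) = (r,s)$ and using $s < r$ to rule out $r \le s$, we are left with $r \ge \sigma(s)$, as desired.

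The argument is short, and its only genuine content lies in the two opening moves: extracting $m \mid n$ from the coprimality of $k$ and $m$, and then feeding the inclusion $m \mid n$ into Theorem 2. I expect the divisibility step to be the conceptual crux, since it is what converts the arithmetic equality $\sigma(n)/n = k/m$ into the structural fact that $m$ divides $n$; once that is in hand, the contradiction is an immediate consequence of the monotonicity already established, and the role of the hypothesis $m < k$ is simply to guarantee $k/m > 1$ so that it is a legitimate candidate for an abundancy index.
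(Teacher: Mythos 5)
Your proof is correct: the divisibility step ($m\,\sigma(n)=kn$ together with $\gcd(k,m)=1$ forces $m\mid n$) followed by the monotonicity of Theorem 2 yields $k\ge\sigma(m)$, contradicting $k<\sigma(m)$, and your treatment of the ``hence'' clause, including the degenerate case $r=s=1$, is sound. The paper itself states this theorem without proof (citing Weiner), and your argument is precisely the standard one behind that citation, assembled entirely from results the paper has already established.
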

\noindent Example of such outlaws given by Holdener and Stanton \cite{Hold} are
$$5/4,7/6,9/8,10/9,11/6,11/8, 11/9, 11/10,13/8,13/10,13/12,15/14,16/15,...$$
The previous theorem was also proven by Anderson\cite{Anderson}. The theorem implies that $\frac{k + 1}{k}$ is an Abundancy index if and only if $k$ is prime,also $\frac{k + 2}{k}$ is an Abundancy outlaw whenever $k$ is an odd composite number. This is a very important result shown by Weiner, which concludes that there are rationals in \((1,\infty)\) which are not Abundancy index of any integer. This can be proven using \textbf{Theorem 3.2}.
\begin{theorem} {\normalfont (Wein\cite{Wein})}:  The set of Abundacy outlaws is dense in \((1,\infty)\). 
\end{theorem}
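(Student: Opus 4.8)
The plan is to show that every open interval $(a,b)\subseteq(1,\infty)$ contains an abundancy outlaw; density then follows immediately. The only engine I will need is Theorem 3.2: if $\gcd(k,m)=1$ and $m<k<\sigma(m)$, then $k/m$ is an outlaw. So the whole argument reduces to manufacturing, inside the prescribed window $(a,b)$, a fraction $k/m$ in lowest terms whose numerator is squeezed strictly between $m$ and $\sigma(m)$.

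First I would fix the denominator by taking $m=P_N=\prod_{i=1}^N p_i$, a primorial. By the second proof of the unboundedness theorem, $I(P_N)=\prod_{i=1}^N\bigl(1+\tfrac1{p_i}+\cdots\bigr)\to\infty$, so I can choose $N$ so large that $I(P_N)>b$ while $P_N$ is itself as large as desired. With this choice, any integer $k$ lying in the window $(am,bm)$ automatically satisfies the size constraints of Theorem 3.2: since $a>1$ we get $k>am>m$, and since $b<I(m)=\sigma(m)/m$ we get $k<bm<\sigma(m)$. Thus every such $k$ that is moreover coprime to $m$ yields an outlaw $k/m\in(a,b)$, and the proof is finished the moment one such $k$ is exhibited.

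The crux — and the one genuinely quantitative step — is producing an integer $k\in(am,bm)$ with $\gcd(k,m)=1$. The window has length $(b-a)m$, which tends to infinity, but because $m$ is a primorial the proportion $\phi(m)/m=\prod_{i\le N}(1-1/p_i)$ of admissible residues is small, so I cannot simply appeal to a spacing of $1/m$. I would resolve this in one of two ways. The clean elementary route is a Legendre (inclusion--exclusion) count:
\begin{equation}
\#\{\,k\in(am,bm):\gcd(k,m)=1\,\}=(b-a)\,\phi(m)+O\!\left(2^{\omega(m)}\right),
\end{equation}
and for $m=P_N$ the main term $(b-a)\prod_{i\le N}(p_i-1)$ dominates the error $2^{N}$ and tends to infinity, so the window contains (in fact many) coprime numerators once $N$ is large. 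Alternatively, and perhaps more transparently, I would pick a prime $q$ in the interval $\bigl((a-1)m,(b-1)m\bigr)$: its endpoints have the fixed ratio $(b-1)/(a-1)>1$ and slide off to infinity, so such a prime exists for large $m$ by the prime number theorem (or by iterated Bertrand's postulate once the ratio exceeds $2$), and setting $k=m+q$ gives $\gcd(k,m)=\gcd(q,m)=1$ because $q>(a-1)m>p_N$.

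I expect the coprimality step to be the main obstacle: the tension is that making $I(m)$ large forces $m$ to be built from many small primes, which is precisely what thins out the residues coprime to $m$. The resolution rests on the fact that the window grows linearly in $m$, whereas the sieve error $2^{\omega(m)}$ — equivalently, the maximal gap between consecutive coprime residues, Jacobsthal's function — grows only sub-linearly along primorials; hence a coprime numerator must eventually appear. Once it does, Theorem 3.2 converts it into an outlaw lying in $(a,b)$, and since $(a,b)$ was arbitrary the set of outlaws is dense in $(1,\infty)$.
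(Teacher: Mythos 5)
Your proposal is correct, and there is an important point of comparison to make at the outset: the paper states this theorem without any proof at all, merely attributing it to Weiner, so you have supplied something the paper lacks --- a self-contained derivation from Theorem 3.2, which is indeed the engine of Weiner's original treatment. Your reduction is exactly right: choosing $m=P_N$ with $I(P_N)>b$ (possible since $\sum 1/p$ diverges, as in the second proof of the unboundedness theorem) makes the two size constraints $m<k<\sigma(m)$ of Theorem 3.2 automatic for every integer $k\in(am,bm)$, so the whole problem collapses to exhibiting one numerator in that window coprime to $m$, and both of your resolutions of that step are sound. The Legendre count has main term $(b-a)\phi(P_N)\geq (b-a)\cdot 2\cdot 4^{N-2}$, which swamps the sieve error (for an interval the error is at most $2^{\omega(m)+1}$, twice the one-endpoint bound, which changes nothing); the prime-shift $k=m+q$ with $q$ a prime in $\bigl((a-1)m,(b-1)m\bigr)$ is even cleaner, since $P_N/p_N=P_{N-1}\to\infty$ guarantees $q>p_N$ for large $N$, hence $\gcd(q,m)=1$, and you correctly note that plain Bertrand suffices only when $(b-1)/(a-1)>2$ while the prime number theorem covers the general fixed ratio. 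Two trivial nits, neither affecting correctness: for squarefree $m=P_N$ one has $I(P_N)=\prod_{i\leq N}(1+1/p_i)$ exactly, so the trailing $+\cdots$ inside your product is spurious; and if the given interval is $(1,b)$ you should first shrink it to $(a',b)$ with $a'>1$ so that the strict inequality $k>a'm>m$ holds. With those cosmetic repairs your argument is a complete and correct proof of the stated density.
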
 

In the next three theorems we are giving  few general forms of abundancy outlaw, which were studied by Holdener and Stanton \cite{Hold}. These are some particular cases of proven results by Holdener \cite{Hold1}. For the original general results someone may look into the original paper of Holdener \cite{Hold1}. \textbf{Theorem 3.4} is really just the special case of \textbf{Theorem 3.5} with $p = 2$.

\begin{theorem} For all primes \(p>3\), $$\frac{\sigma(2p)+1}{2p}$$ is an Abundancy outlaw. If $p=2$ or $p= 3$ then $\frac{\sigma(2p)+1}{2p}$ is an Abundancy index.
\end{theorem}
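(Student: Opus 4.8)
The plan is to argue by contradiction, and it is worth stressing at the outset that Weiner's outlaw test (Theorem 3.2) does \emph{not} apply here: with denominator $2p$ the numerator is $\sigma(2p)+1 = 3p+4$, which is strictly \emph{larger} than $\sigma(2p) = 3p+3$, so Weiner's hypothesis (that the numerator lie strictly between the denominator and $\sigma$ of the denominator) fails and the rational escapes the cheap criterion. First I would record that for odd $p$, multiplicativity of $\sigma$ gives $\sigma(2p) = \sigma(2)\sigma(p) = 3(p+1)$, so the fraction is $\tfrac{3p+4}{2p}$, and a quick check shows $\gcd(3p+4,2p)=1$ for every prime $p>3$ (the numerator is odd and $3p+4 \equiv 4 \pmod p$ with $p \nmid 4$), so it is already in lowest terms.

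Next, supposing $I(n) = \tfrac{3p+4}{2p}$ for some $n$, I would use that the reduced denominator of $I(n)$ divides $n$ to get $2p \mid n$, and write $n = 2^a p^b m$ with $a,b \geq 1$ and $\gcd(m,2p)=1$. Combining multiplicativity (Theorem 1) with Laatsch's monotonicity $I(kn) \geq I(n)$ (Theorem 2), I would first pin down the $2$-part: if $a \geq 2$ then $I(n) \geq I(4)\,I(p) = \tfrac74\cdot\tfrac{p+1}{p} = \tfrac{7p+7}{4p}$, which exceeds the target $\tfrac{3p+4}{2p} = \tfrac{6p+8}{4p}$ because their difference is $\tfrac{p-1}{4p} > 0$. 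Hence $a = 1$.

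With $a = 1$, dividing out $I(2) = \tfrac32$ gives $I(p^b m) = I(p^b)\,I(m) = \tfrac{3p+4}{3p}$. The crucial observation, and the heart of the argument, is that this new fraction is again in lowest terms for $p>3$ and its denominator carries a factor of $3$ even though $3 \nmid 2p$. Since the reduced denominator of $I(p^b m)$ must divide $p^b m$, I conclude $3 \mid p^b m$, and as $3 \neq p$ this forces $3 \mid m$. But then $I(m) \geq I(3) = \tfrac43$ by Theorem 2, so
\[
I(n) = \tfrac32\,I(p^b)\,I(m) \;\geq\; \tfrac32\cdot\tfrac{p+1}{p}\cdot\tfrac43 \;=\; \frac{4p+4}{2p} \;>\; \frac{3p+4}{2p},
\]
contradicting the assumed value and ruling out every such $n$. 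The step I expect to be delicate is exactly this denominator bookkeeping: one must verify that stripping the $2$-part cleanly produces the factor of $3$ (which is why fixing $a=1$ first matters) and that $p>3$ prevents this $3$ from being absorbed into the $p$-part.

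Finally, for the exceptional primes I would simply exhibit witnesses. For $p=2$ we have $\sigma(4)=7$, so $\tfrac{\sigma(4)+1}{4} = 2 = I(6)$; and for $p=3$ we have $\sigma(6)=12$, so $\tfrac{\sigma(6)+1}{6} = \tfrac{13}{6} = I(18)$ since $\sigma(18)=39$. In both cases the fraction is a genuine abundancy index. It is instructive that $p=3$ is precisely where the main argument breaks: after removing the $2$-part one gets $\tfrac{3p+4}{3p} = \tfrac{13}{9}$, whose extra factor of $3$ now merges with the $p$-part, so instead of forcing a new prime it merely forces $b \geq 2$, and indeed $n = 2\cdot 3^2 = 18$ solves the equation.
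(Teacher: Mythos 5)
Your proof is correct and follows essentially the same route as the paper's: reduce $\frac{\sigma(2p)+1}{2p}=\frac{3p+4}{2p}$ to lowest terms to force $2p \mid n$, rule out $4 \mid n$ by comparison with $I(4p)$, extract the factor $3$ arising from $\sigma(2)=3$ to force $3 \mid n$, and conclude via $I(n) \geq I(6p) = \frac{2(p+1)}{p} > \frac{3p+4}{2p}$, with the same witnesses $I(6)$ and $I(18)$ for $p=2,3$. Your bookkeeping through the reduced denominator of $I(p^b m)=\frac{3p+4}{3p}$ is just a repackaging of the paper's observation that $\sigma(2)=3$ divides $\sigma(n)$ while $3 \nmid 3p+4$, and your direct final comparison is actually cleaner than the paper's somewhat garbled closing chain of inequalities.
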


For $p=2$ or $p= 3$, it is easy to see that $\frac{\sigma(2p)+1}{2p}$ is an Abundancy index since $I(6)=\frac{\sigma(4)+1}{4}$ and $I(18)=\frac{\sigma(6)+1}{6}$ . By substituting $\sigma(p) =3+3p$ we can get an explicit expression. Note that $\frac{\sigma(2p)+1}{2p}=\frac{3p+4}{2p}$ is in lowest terms. Therefore if $I(n)=\frac{\sigma(2p)+1}{2p}$, then $2p|n$. Now since $p>3$, we have $I(4p)>(\sigma(2p)+1)/2p$, so $4\not|N$. Hence we have, $\sigma(2)|\sigma(N)$. Also note that since $\sigma(2p) + 1$ is not divisible by $\sigma(2) = 3$, 3 divides $N$. Therefore we can write
$$I(n)>I(6p)>2>I(4p)>\sigma(2p)+1)/2p$$
We hence arrive at a contradiction. Hence $(\sigma(2p) + 1)/2p$ is an Abundancy outlaw. Example of such outlaws given by Holdener and Stanton \cite{Hold} are 
$$\frac{19}{10},\frac{25}{14},\frac{37}{22},\frac{43}{26},\frac{55}{34},\frac{61}{38},\frac{73}{46},\frac{91}{58},\frac{97}{62},\frac{115}{74},\frac{127}{82},\frac{133}{86},\frac{145}{94},\frac{163}{106},\frac{181}{118},\frac{187}{122}...$$

\begin{theorem} For primes \(p,q\) with \(q>3\),~\(q>p\) and $\gcd(p,q+2)=\gcd(q, p+2) = 1$, $$\frac{\sigma(pq)+1}{pq}$$
is an Abundancy outlaw.
\end{theorem}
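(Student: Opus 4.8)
The plan is to argue by contradiction: assume some $n\in\mathbb{N}$ satisfies $I(n)=\frac{\sigma(pq)+1}{pq}$ and then force a prime into $n$ that pushes $I(n)$ strictly above this target. First I would record the closed form: since $p,q$ are distinct primes, $\sigma(pq)=(1+p)(1+q)$, so the candidate rational is $\frac{\sigma(pq)+1}{pq}=\frac{pq+p+q+2}{pq}$. The first genuine step is to check that this is already in lowest terms, and this is exactly where the hypotheses enter: $\gcd(pq,\,pq+p+q+2)=\gcd(pq,\,p+q+2)$, and since $p+q+2\equiv q+2\pmod p$ and $p+q+2\equiv p+2\pmod q$, the conditions $\gcd(p,q+2)=\gcd(q,p+2)=1$ guarantee that neither $p$ nor $q$ divides $p+q+2$; hence the fraction is reduced. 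A reduced Abundancy index has denominator dividing $n$ (if $\sigma(n)/n=r/s$ with $\gcd(r,s)=1$ then $s\mid rn$ forces $s\mid n$), so $pq\mid n$. Writing $n=pqt$ then yields the useful identity $\sigma(n)=(\sigma(pq)+1)\,t$.

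Next I would pin down the exact power of $p$ dividing $n$, showing $p\,\|\,n$. If instead $p^2\mid n$, then $p^2q\mid n$ and the monotonicity property $I(kn)\ge I(n)$ (Theorem 2) gives $I(n)\ge I(p^2q)$. A one-line computation using multiplicativity (Theorem 1), namely $I(p^2q)=\frac{1+p+p^2}{p^2}\cdot\frac{1+q}{q}$, yields $I(p^2q)-\frac{\sigma(pq)+1}{pq}=\frac{1+q-p}{p^2q}>0$ because $q>p$; this contradicts $I(n)=\frac{\sigma(pq)+1}{pq}$. Hence $p^2\nmid n$, so $p$ divides $n$ exactly once.

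The crux of the argument — and the step I expect to be the main obstacle — is extracting a brand-new prime power into $n$. Since $p\,\|\,n$, multiplicativity of $\sigma$ exhibits $\sigma(p)=p+1$ as a factor of $\sigma(n)$, so $(p+1)\mid\sigma(n)=(\sigma(pq)+1)\,t$. Reducing modulo $p+1$ (where $p\equiv-1$) gives $\sigma(pq)+1=pq+p+q+2\equiv 1$, so $\gcd(p+1,\sigma(pq)+1)=1$; therefore $(p+1)\mid t$, and consequently $pq(p+1)\mid n$. The delicate point is that this forces genuinely new arithmetic content into $n$ rather than recycling $p$ or $q$: one must verify $\gcd(pq,p+1)=1$, and this is where $q>3$ is used, since $q>3$ together with $q>p$ rules out the only consecutive-prime coincidence $q=p+1$ and guarantees $q>p+1$, whence $q\nmid p+1$.

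Finally I would close the loop. Because $\gcd(pq,p+1)=1$, multiplicativity gives $I(pq(p+1))=I(pq)\,I(p+1)$, and using the trivial bound $I(p+1)\ge 1+\frac{1}{p+1}$ one computes $I(pq(p+1))-\frac{\sigma(pq)+1}{pq}\ge\frac{1}{p}>0$. Since $pq(p+1)\mid n$, Theorem 2 gives $I(n)\ge I(pq(p+1))>\frac{\sigma(pq)+1}{pq}$, contradicting $I(n)=\frac{\sigma(pq)+1}{pq}$. Hence no such $n$ exists and $\frac{\sigma(pq)+1}{pq}$ is an Abundancy outlaw. I expect the two inequality computations (for $I(p^2q)$ and for $I(pq(p+1))$) to be routine once the divisibility skeleton is in place; the conceptual weight lies entirely in the lowest-terms reduction and the prime-extraction congruence of the third paragraph.
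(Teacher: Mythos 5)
Your proof is correct and follows essentially the same route as the paper's own argument, which carries out exactly this skeleton --- lowest-terms reduction forces $pq\mid n$, a comparison against the index of a larger prime power rules out $p^2\mid n$, and then $\sigma(p)=p+1$, being coprime to $\sigma(pq)+1$, forces a new factor into $n$ yielding a monotonicity contradiction --- but only for the special case $p=2$ (Theorem 3.4), deferring the general case to Holdener and Stanton. Your write-up correctly supplies the general-$p$ details the paper omits, including the two explicit gap computations and the verification $\gcd(pq,p+1)=1$ from $q>3$, $q>p$.
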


Note that if $p$ and $q=p+2$ are twin primes then \textbf{Theorem 3.5} does not hold true. We get
$$\frac{\sigma(p(p+2))+1}{p(p+2)}=\frac{\sigma(p)+1}{p}=\frac{p+2}{p}$$
Abundancy index satisfying $I(x)=\frac{p+2}{p}$ has been studied by Ryan\cite{Ryan}. It is still not known whether any such example exist. The existence of such a solution is important since if $\frac{5}{3}=\frac{3+2}{3}$ is an Abundancy index then there must exist an odd perfect number.
A similar example can be made about \textbf{Theorem 3.5} as we have done earlier for \textbf{Theorem 3.4}. For this we assume that the two odd primes $p,q$, satisfying $q\equiv 1 (\bmod p)$. Then $p\not |q+2$ and $q\not |p+$2 .Now by Dirichlet's theorem on arithmetic progressions of primes, we know that there are infinitely many such pairs of odd primes $p,q$. Example of such outlaws given by Holdener and Stanton \cite{Hold} are\\
For $p=5$
$$\frac{73}{55},\frac{193}{155},\frac{253}{205},\frac{373}{305},\frac{433}{355},\frac{613}{505},\frac{793}{655},\frac{913}{755},\frac{1093}{905},\frac{1153}{955},\frac{1273}{1055},\frac{1513}{1255},\frac{1633}{1355}$$
$$\frac{1693}{1405},\frac{1873}{1555},\frac{1993}{1655},\frac{2413}{2005},\frac{2533}{2105}, ..$$ 
For $p=7$
$$\frac{241}{203},\frac{353}{301},\frac{577}{497},\frac{913}{791},\frac{1025}{889},\frac{1585}{1379},\frac{1697}{1477},\frac{1921}{1673},\frac{2257}{1967},\frac{2705}{2359},\frac{3041}{2653},\frac{3377}{2947},\frac{3601}{3143}$$
$$\frac{3713}{3241},\frac{3937}{3437},\frac{4385}{3829},\frac{4945}{4319}..$$ 
For $p=11$
$$\frac{289}{253},\frac{817}{737},\frac{1081}{979},\frac{2401}{2189},\frac{3985}{3641},\frac{4249}{3883},\frac{4777}{4367},\frac{5041}{4609},\frac{5569}{5093},\frac{7417}{6787},\frac{7945}{7271},\frac{8209}{7513},\frac{8737}{7997}$$
$$\frac{10321}{9449},\frac{10585}{9691},\frac{11377}{10417},...$$ 

\begin{theorem}  If \(N\) is an even perfect number, then \(\frac{\sigma(2N)+1}{2N}\) is an abundancy outlaw.
\end{theorem}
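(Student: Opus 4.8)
The plan is to imitate the argument behind Theorem 3.4, exploiting the Euclid--Euler description of even perfect numbers. First I would write $N = 2^{p-1}(2^p-1)$, where $q := 2^p-1$ is a Mersenne prime, so that $2N = 2^p q$ and, by multiplicativity of $\sigma$, $\sigma(2N) = 2^p(2^{p+1}-1)$. A short computation then records the target value in the transparent form
\[
\frac{\sigma(2N)+1}{2N} = 2 + \frac{1}{q} + \frac{1}{2^p q},
\]
and shows this fraction is already in lowest terms: its numerator is odd and is congruent to $2$ modulo $q$, hence coprime to $2N = 2^p q$. Consequently, if $I(n) = (\sigma(2N)+1)/2N$ for some $n$, then $2N \mid n$; writing $n = 2Nk$ yields $\sigma(n) = (\sigma(2N)+1)k$, the two relations I will play against each other.

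The crux is to pin down the exact power of $2$ dividing $n$. I would argue that $2^{p+1} \nmid n$: if it did, then $2^{p+1}q \mid n$, and by the monotonicity of $I$ (Theorem 2 together with the multiplicativity of Theorem 1) one gets $I(n) \ge I(2^{p+1}q) = 2 + \tfrac{3}{2q}$, which is strictly larger than the target $2 + \tfrac1q + \tfrac1{2^p q}$ whenever $2^p > 2$, a contradiction. Hence the $2$-adic part of $n$ is exactly $2^p$, so $n/2^p$ is odd and, again by multiplicativity, $\sigma(2^p) = 2^{p+1}-1$ divides $\sigma(n)$.

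Now set $s_0 := 2^{p+1}-1 = 2q+1$. Since $\sigma(2N) = 2^p s_0 \equiv 0 \pmod{s_0}$, the numerator satisfies $\sigma(2N)+1 \equiv 1 \pmod{s_0}$ and so is coprime to $s_0$; combined with $s_0 \mid \sigma(n) = (\sigma(2N)+1)k$ this forces $s_0 \mid k$, hence $s_0 \mid n$. Because $\gcd(2^{p+1}-1,\,2^p-1) = 2^{\gcd(p+1,p)}-1 = 1$ and $s_0$ is odd, $s_0$ is coprime to $2N = 2^p q$, so $2^p q\, s_0 \mid n$ with $n/2^p$ odd. Applying multiplicativity and monotonicity once more,
\[
I(n) \ge I(2^p)\,I(q)\,I(s_0) \ge \Bigl(2-\tfrac{1}{2^p}\Bigr)\Bigl(1+\tfrac1q\Bigr)\Bigl(1+\tfrac{1}{s_0}\Bigr) = \Bigl(2+\tfrac1q\Bigr)\Bigl(1+\tfrac{1}{2q+1}\Bigr) = 2 + \tfrac2q,
\]
where the telescoping uses $2^p = q+1$ and $s_0 = 2q+1$. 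Since $2 + \tfrac2q > 2 + \tfrac1q + \tfrac1{2^p q}$, this contradicts $I(n) = (\sigma(2N)+1)/2N$, and the theorem follows.

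I expect the main obstacle to be the middle step, showing that the $2$-adic valuation of $n$ is exactly $p$, since everything downstream depends on $n/2^p$ being odd so that $2^{p+1}-1$ can be split off as a genuinely new, coprime factor. The supporting bookkeeping (checking the fraction is reduced, that $\gcd(2^{p+1}-1,2^p-1)=1$, and that the two closing inequalities point the right way) is routine, but it must be handled carefully because the target lies only slightly above $I(2N) = 2+\tfrac1q$, so a loose bound could fail to separate them.
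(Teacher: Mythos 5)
Your proof is correct, but note that the paper itself offers no proof of this theorem: it is stated as a quoted result of Stanton and Holdener, with only the preceding Theorem 3.4 given an argument sketch. What you have done is adapt that Theorem 3.4 template (reduce the fraction to force divisibility, cap the power of $2$ by a monotonicity bound, extract a new coprime factor from $\sigma(2^p)\mid\sigma(n)$, then beat the target from below) to the perfect-number setting, and every step checks out: writing $N=2^{p-1}q$ with $q=2^p-1$ prime, the numerator $\sigma(2N)+1=2^{2p+1}-2^p+1$ is odd and $\equiv 2 \pmod q$, so the fraction $2+\frac1q+\frac{1}{2^pq}$ is indeed reduced and $2N\mid n$; the bound $I(2^{p+1}q)=2+\frac{3}{2q}>2+\frac1q+\frac{1}{2^pq}$ (valid since $2^p\geq 4$) correctly pins the $2$-adic valuation at exactly $p$; the coprimality $\gcd(2^{p+1}-1,\,2^p-1)=1$ and $\gcd(s_0,\sigma(2N)+1)=1$ with $s_0=2q+1$ legitimately force $2^pqs_0\mid n$; and the closing computation $\bigl(2-\frac{1}{2^p}\bigr)\bigl(1+\frac1q\bigr)\bigl(1+\frac{1}{s_0}\bigr)=2+\frac2q$ strictly exceeds the target. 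One feature of your write-up deserves emphasis as a genuine strength: you never assume $s_0=2^{p+1}-1$ is prime (it is not for $p=3$, where $s_0=15$), using only $I(s_0)\geq 1+\frac{1}{s_0}$, which is exactly the care needed for the argument to cover all even perfect numbers rather than just the case $N=6$. Since the paper defers entirely to the literature here, your argument fills that gap rather than duplicating or diverging from anything in the text.
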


\section{Superabundant Numbers}
\begin{definition} A positive integer \(n\) is called superabundant if \(I(m)<I(n)\) \(\forall m < n\).
\end{definition}

The first few superabundant numbers are 1, 2, 4, 6, 12, 24, 36, 48, 60, 120, 180. Ramanujan \cite{Rama1}\cite{Rama2}\cite{Rama3} in 1915 first introduced the idea of superabundant numbers. In 30 pages of Ramanujan's paper "Highly Composite Numbers" Ramanujan defined generalized highly composite numbers, which is a generalized case of superabundant numbers. Ramanujan's work remained unpublished till 1997 when it was published in Ramanujan Journal. The idea of Superabundant numbers were also independently defined by Alaoglu and Erdős \cite{Alag} in 1944, who are unknown to the unpublished work done by Ramanujan earlier in 1915.

\begin{theorem} There are infinitely many superabundant numbers.
\end{theorem}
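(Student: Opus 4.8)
The plan is to show that the sequence of superabundant numbers is infinite by exhibiting, for any given superabundant number, a strictly larger one—or equivalently, by showing that the abundancy index attains new record highs infinitely often. The key conceptual fact I would exploit is that $I(n)$ is unbounded (Theorem 3) while $I$ takes only finitely many values below any fixed bound, so the "running maximum" of $I$ must be achieved at infinitely many integers, and each such integer is by definition superabundant.

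First I would make precise the notion of a record. Define a positive integer $n$ to be a record point if $I(n) > I(m)$ for every $m < n$; this is exactly Definition 4.1 of a superabundant number. The goal reduces to proving there are infinitely many record points. Suppose for contradiction that there are only finitely many, and let $n_0$ be the largest superabundant number. Then for every $n > n_0$ we would have $I(n) \le \max_{m \le n_0} I(m) =: M$, since $n$ fails to set a new record and the maximum of $I$ over all integers up to $n$ is already achieved at or below $n_0$. Consequently $I(n) \le M$ for all $n \in \mathbb{N}$, meaning $I$ is bounded above by $M$.

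This directly contradicts Theorem 3 (Laatsch), which asserts that $I(n)$ is unbounded: there exists an integer whose abundancy index exceeds $M$. Hence the assumption that finitely many superabundant numbers exist is false, and there must be infinitely many. I would present this as the cleanest route, since it leans entirely on the already-established unboundedness.

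**The main obstacle** to watch for is the subtle point that a new record is genuinely attained, not merely approached—that is, one must confirm the running maximum $\max_{m \le n} I(m)$ is a maximum (attained) rather than a supremum. This is immediate because for each $n$ the set $\{I(m) : m \le n\}$ is finite, so the maximum exists and is realized at some specific integer; the smallest integer realizing a value strictly greater than all preceding values is superabundant. I would take care to state this finiteness observation explicitly, as it is the hinge that upgrades "$I$ is unbounded" into "$I$ sets infinitely many records." No heavy computation is needed; the argument is purely a pigeonhole-style consequence of unboundedness combined with the discreteness of the domain.
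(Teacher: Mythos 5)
Your proof is correct, but it takes a genuinely different route from the paper's. You argue from unboundedness: if $n_{0}$ were the largest superabundant number, then (by induction on $n$) the running maximum $\max_{m \le n} I(m)$ could never rise past $M = \max_{m \le n_{0}} I(m)$, since every $n > n_{0}$ fails to set a record, so $I$ would be bounded by $M$, contradicting Laatsch's unboundedness theorem (cited in the paper's proofs as Theorem 3); your explicit observation that $\{I(m) : m \le n\}$ is finite, so the maximum is attained, correctly closes the one potential loophole. The paper instead uses only the weaker input $I(kn) \ge I(n)$ with equality iff $k=1$ (cited as Theorem 2): if $n$ is the largest superabundant number, then $I(2n) > I(n)$, yet $2n$ is not superabundant, so some $n_{0}$ with $n < n_{0} < 2n$ satisfies $I(n_{0}) \ge I(2n)$; taking the least such $n_{0}$ and repeating the argument inside the window $(n, 2n)$ produces a smaller witness, contradicting minimality. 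The paper's descent buys something quantitative that your argument does not: it shows every superabundant $n$ is followed by another superabundant number at most $2n$, which is precisely the ``spacing grows at most exponentially'' fact invoked after Definition 4.2 to get $S(x) \ge \log x$; your records argument is logically lighter and needs no descent, but yields infinitude with no gap bound. One phrasing slip to repair: your opening claim that ``$I$ takes only finitely many values below any fixed bound'' is false as stated (the values $I(p) = 1 + \frac{1}{p}$ over primes $p$ are infinitely many distinct values below $2$); what your argument actually uses, and what you state correctly later, is that $I$ takes only finitely many values on each initial segment $\{1,\dots,n\}$.
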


\begin{proof} 
Let us assume there are finitely many superabundant numbers and \(n\) is the largest superabundant number. So $(I(m)<I(n) \mbox{~for~all~} m>n$. 
Now let us consider the integer \(2n\). By Theorem 2 we know \(I(2n)>I(n)\). So \(I(m)<I(2n)\). But \(2n\) cannot be a superabundant number. So \(\exists n_0 \backepsilon I(n_0)>I(2n)\) and \(n<n_0<2n\). Let us consider the least \(n_0\). We know $$I(n_0)>I(2n)>I(n)>I(m) \mbox{~for~all~} m < n$$
$n_0$ cannot be a superabundant number. $\exists n_1 \backepsilon I(n_0)>I(n_1)$ and $n<n_1<n_0$. It is easy to see $I(n_1)>I(2n)$ and \(n<n_1<2n\). But we had assumed $n_0$ to be least such integer. Hence we get a contradiction.
\end{proof}

So we can conclude there are infinitely many superabundant numbers.

Now we draw a connection between superabundant numbers and well known Riemann Hypothesis\cite{Riemann}, which is considered as one of the most important unsolved problems in Mathematics. Riemann Hypothesis conjectures that the Riemann zeta function defined as

$$\zeta(s)=\sum_{n=1}^{\infty}\frac{1}{n^s}=\frac{1}{1^s}+\frac{1}{2^s}+\frac{1}{3^s}+\frac{1}{4^s}+...$$

has non-trivial zeros only at the complex numbers with real part $\frac{1}{2}$. This conjecture is of significant interest to number theorists since this result has direct consequences in the distribution of prime numbers.

In 1984 Robin \cite{Robi} proved a surprising result. He showed an equivalence between Riemann Hypothesis and an bound to the Abundancy Index.

\begin{theorem} {\normalfont (Robin\cite{Robi})}: For \(n \geq 3\) we have \(I(n)<e^{\gamma}\log \log n+\frac{0.6483}{\log \log n}\).
\end{theorem}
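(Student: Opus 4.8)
The statement is the unconditional half of Robin's theorem: multiplying through by $n$ it reads $\sigma(n) < e^{\gamma} n \log\log n + 0.6483\, n/\log\log n$ for every $n \ge 3$. My plan is to reduce the inequality to the extremal (superabundant) numbers, to estimate $I$ on those numbers through an effective form of Mertens' theorem, and finally to handle the remaining small cases by direct computation.

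First I would reduce to superabundant numbers. Writing $R(n) = e^{\gamma}\log\log n + 0.6483/\log\log n$, one checks that $R$ is increasing for $n$ beyond a small explicit threshold (its derivative in the variable $u=\log\log n$ is $e^{\gamma}-0.6483/u^{2}$, which is positive once $u$ exceeds about $0.6$). By Definition 4.1 a superabundant number is precisely a place where $I$ sets a new record, so for any $n$ the maximum of $I$ over $\{1,\dots,n\}$ is attained at the largest superabundant number $S(n)\le n$. Hence $I(n)\le I(S(n))$, and once $R$ is monotone it suffices to prove $I(N)\le R(N)$ for superabundant $N$, since then $I(n)\le I(S(n))\le R(S(n))\le R(n)$. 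This isolates the essential content to the sparse sequence of superabundant numbers, whose shape is known: $N=\prod_{i=1}^{k}p_i^{\alpha_i}$ over the first $k$ primes with $\alpha_1\ge\alpha_2\ge\cdots\ge\alpha_k\ge 1$; sharpening this to the colossally abundant numbers (those maximizing $\sigma(n)/n^{1+\varepsilon}$) pins the exponents down explicitly in terms of a single parameter $\varepsilon$ and makes the extremal analysis tractable.

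Second, for such an $N$ with largest prime factor $P=p_k$, I would use the factorization $I(N)=\prod_{i}\frac{p_i^{\alpha_i+1}-1}{p_i^{\alpha_i}(p_i-1)}<\prod_{p\le P}(1-1/p)^{-1}$ and feed in the effective Mertens estimate of Rosser and Schoenfeld, $\prod_{p\le P}(1-1/p)^{-1}\le e^{\gamma}\log P\,(1+c/\log^{2}P)$. Because all exponents are at least $1$, $\log N=\sum_i \alpha_i\log p_i \ge \theta(P)=\sum_{p\le P}\log p$, and Chebyshev-type lower bounds $\theta(P)>(1-o(1))P$ give $\log\log N \ge \log\theta(P)\ge \log P - \delta(P)$ with an explicit small $\delta(P)$. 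Chaining these converts the bound $e^{\gamma}\log P\,(1+c/\log^{2}P)$ into $e^{\gamma}\log\log N$ plus an error term of order $\log\log N/\log^{2}P$, which must then be shown to lie below $0.6483/\log\log N$ for all sufficiently large $P$.

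Third, the finitely many superabundant numbers below the threshold where the asymptotic estimate takes over --- together with the small range where $R$ fails to be monotone (roughly $3\le n\le 6$) --- are checked by direct evaluation of $I$. The main obstacle is the calibration of the constant $0.6483$: every explicit constant in the Rosser--Schoenfeld inequalities, in the Chebyshev bounds on $\theta$, and in the passage from $\log P$ to $\log\log N$ must be tracked through the whole chain, the worst case located at a specific moderate-sized colossally abundant number, and the inequality verified numerically there. This delicate bookkeeping, rather than any single conceptual step, is the hard part of the proof.
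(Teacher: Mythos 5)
First, a point of context: the paper does not prove this theorem at all --- it is quoted from Robin's 1984 paper \cite{Robi} as a known result, so there is no in-paper proof to compare against, and your outline must be judged against Robin's original argument. Its broad strategy you do reproduce faithfully: reduction to superabundant and colossally abundant numbers (exactly the content of Theorem 4.4 in the paper), effective Mertens/Chebyshev estimates, and a finite verification. Your first step is sound --- the smallest maximizer of $I$ on $\{1,\dots,n\}$ is superabundant, and $R(n)=e^{\gamma}\log\log n+0.6483/\log\log n$ is increasing once $\log\log n>(0.6483\,e^{-\gamma})^{1/2}\approx 0.60$, i.e.\ $n\ge 7$. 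Note also that the constant is calibrated at a small case: $0.6483$ is precisely $\bigl(I(12)-e^{\gamma}\log\log 12\bigr)\log\log 12$ with $I(12)=7/3$, so equality essentially holds at $n=12$, and any proof must show the global maximum of $\bigl(I(n)-e^{\gamma}\log\log n\bigr)\log\log n$ occurs there; your final numerical check is therefore not an afterthought but load-bearing.

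The genuine gap is in your second step: the quantitative engine you specify provably cannot reach $0.6483$. Since $\log\log N\approx\log P$, your error budget is $0.6483/\log P$; but the Rosser--Schoenfeld bound $\prod_{p\le P}(1-1/p)^{-1}<e^{\gamma}\log P\,\bigl(1+\tfrac{1}{2\log^{2}P}\bigr)$ by itself already contributes $e^{\gamma}/(2\log P)\approx 0.89/\log P$, over budget before the $\theta$-correction is added (and a Chebyshev-grade bound such as $\theta(P)>P(1-1/\log P)$ would add roughly another $e^{\gamma}/\log P$). Retaining the factors $1-p_i^{-(\alpha_i+1)}$ that you discard does not rescue this: for superabundant $N$ only primes near $P$ carry exponent $1$, and $\sum_{p>\sqrt{P}}p^{-2}=o(1/\log P)$, so the saving is negligible at the relevant scale. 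What the argument actually requires is a Mertens-type estimate with error $o(1/\log P)$ --- of the shape $1+O(1/\log^{3}P)$ or $1+O(e^{-c\sqrt{\log P}})$, which Robin had from his own sharper explicit Chebyshev estimates --- together with a correspondingly strong explicit bound on $\theta$ of the form $|\theta(x)-x|\le \eta\, x/\log x$ with $\eta$ small. Even then, the ``finitely many small cases'' are not a handful of direct evaluations: the thresholds such explicit estimates impose force a verification over all superabundant (or colossally abundant) numbers up to an enormous height. So the skeleton is Robin's, but with R--S constants as invoked the chain falls short of the stated constant by a fixed factor, and since the inequality is tight at $n=12$ there is no slack anywhere to absorb that loss.
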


\begin{theorem} {\normalfont (Robin\cite{Robi})}:  The Riemann Hypothesis is true if and only if \(I(n)<e^{\gamma}\log \log n\) \(\mbox{~for~all~} n \geq 5041\).
\end{theorem}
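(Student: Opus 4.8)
The plan is to exploit the extremal property of superabundant numbers (Definition 4.1). First I would reduce the universal inequality to these extremal numbers: if some $n>5040$ violated $I(n)<e^{\gamma}\log\log n$, take the smallest such $n$; were it not superabundant there would exist $m<n$ with $I(m)\geq I(n)\geq e^{\gamma}\log\log n>e^{\gamma}\log\log m$, a smaller violator, contradicting minimality. Hence it suffices to verify the inequality on superabundant numbers, and for the sharp asymptotics one passes to the slightly more rigid subclass of \emph{colossally abundant} numbers, which maximize $\sigma(n)/n^{1+\varepsilon}$ and have the explicit shape $N=\prod_{p\leq y}p^{a_p}$ with exponents $a_p$ non-increasing in $p$.

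For such $N$ I would write the logarithm as a sum over primes,
$$\log I(N)=\sum_{p\leq y}\log\frac{1-p^{-(a_p+1)}}{1-p^{-1}},$$
separate the dominant term $\sum_{p\leq y}\log\frac{p}{p-1}$ from the small tail produced by the finite exponents, and invoke Mertens' third theorem $\prod_{p\leq y}(1-p^{-1})^{-1}=e^{\gamma}\log y\,(1+o(1))$. This is precisely the source of the constant $e^{\gamma}$ and recovers Gronwall's classical result $\limsup_{n\to\infty}I(n)/\log\log n=e^{\gamma}$, confirming that the bound in the statement is asymptotically the correct one.

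The entire content of the theorem lies in the error term. The gap between $I(N)$ and $e^{\gamma}\log\log N$ is governed by how closely the Chebyshev function $\theta(y)=\sum_{p\leq y}\log p$ tracks its predicted value $y$, so I would recast the inequality as an effective upper bound on $\theta(y)-y$, turning an elementary statement about $\sigma$ into a statement about the error term in the Prime Number Theorem. The bridge to the hypothesis is the standard equivalence that the Riemann Hypothesis holds if and only if $\theta(y)=y+O(\sqrt{y}\,\log^2 y)$, which comes from the explicit formula relating $\theta$ to the non-trivial zeros of $\zeta$. For the forward direction I would substitute this sharp bound into the estimate above to show that beyond $n=5040$ the oscillations of $\theta(y)-y$ are too small to push $I(N)$ past the threshold — the constant $0.6483$ of \textbf{Theorem 4.2} being exactly the slack that forbids violations for large $n$ — leaving only a finite range to settle by direct computation, which is what pins down the value $5041$.

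The main obstacle is the converse. Assuming the inequality for all $n\geq 5041$ one must recover the Riemann Hypothesis, and here I would argue by contraposition: if $\zeta$ possessed a zero with real part $\beta>\tfrac12$, an effective oscillation theorem of Landau type would force $\theta(y)-y$ to exceed $c\,y^{\beta}$ in absolute value along a sequence $y\to\infty$. Transporting an oscillation of the appropriate sign back through the product formula for $I(N)$ along a carefully chosen colossally abundant sequence would then produce infinitely many $N$ with $I(N)\geq e^{\gamma}\log\log N$, contradicting the hypothesis. Making this transport quantitative — matching the magnitude and sign of the oscillation against the precise constant $e^{\gamma}$, and guaranteeing that the resulting $N$ genuinely exceed $5040$ — is the delicate heart of the argument, and is where the full strength of Robin's analysis is required.
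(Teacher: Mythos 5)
You should first be aware that the paper itself contains no proof of this statement: it is quoted from Robin's 1984 paper with a citation, so the only meaningful benchmark is Robin's published argument. Measured against that, your roadmap is faithful in outline — reduction toward extremal numbers, passage to colossally abundant numbers, Mertens' third theorem as the source of $e^{\gamma}$, Gronwall's asymptotic, recasting the gap as a bound on $\theta(y)-y$, explicit-formula bounds under RH for one direction, and a Landau-type oscillation theorem applied along a colossally abundant sequence for the contrapositive of the other. As a plan, this is the right skeleton.

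As a proof, however, it has concrete gaps. First, your opening reduction is incomplete: from a least violator $n>5040$ that is not superabundant you obtain $m<n$ with $I(m)\geq I(n)\geq e^{\gamma}\log\log n>e^{\gamma}\log\log m$, but this contradicts minimality only when $m>5040$. Genuine violators exist below the threshold — indeed $I(5040)>e^{\gamma}\log\log 5040$, which is precisely why the theorem begins at $5041$ — so the case $m\leq 5040$ must be closed separately (e.g.\ by noting that $I(m)\leq I(5040)$ then caps $e^{\gamma}\log\log n$, confining $n$ to a finite range that is checked directly); this is the actual content of the Akbary--Friggstad result stated as Theorem 4.4 in the paper, and it cannot be obtained by the one-line minimality argument alone. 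Second, the converse direction — transporting a Landau oscillation of $\theta(y)-y$ of the right sign and magnitude through the product formula for $I(N)$ along colossally abundant $N$, with all constants explicit — is exactly where Robin's real quantitative work lies; you name this step but supply no estimate, so nothing is established. Third, the forward direction requires explicit Schoenfeld-type constants (such as $|\theta(y)-y|<\frac{1}{8\pi}\sqrt{y}\log^{2}y$ for $y$ large enough), not a bare $O(\sqrt{y}\log^{2}y)$, together with the finite verification that pins down $5041$; and your remark that the constant $0.6483$ of Theorem 4.2 is ``exactly the slack that forbids violations'' is a misreading — Theorem 4.2 is unconditional, holds whether or not RH is true, and therefore cannot by itself forbid counterexamples; the conditional direction rests on separate explicit estimates.
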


\noindent \textbf{Note:} Here \(\gamma\) denotes Euler's Gamma Constant(also known as Euler–Mascheroni constant). It is the limiting difference between the the natural logarithm and harmonic series.
$$\gamma=\lim_{x\to\infty}(-\ln x+\sum_{k=1}^{x}\frac{1}{k})$$
The value of Euler's Gamma Constant is approximately 0.57721\cite{NSloane}. \textbf{Theorem 4.3}(Robin's Inequality) is the most striking result here, it gives an alternative approach to prove or disprove Riemann's Hypothesis, one of the greatest problems in Number Theory.

This result by Robin's inequality is supported by many other findings. Gronwall \cite{Gronwall} found that 
$$\lim_{n\to\infty}\frac{\sigma(n)}{e^{\gamma}\log \log n}=1$$
Wojtowicz\cite{Wojtowicz} further showed that the values of $f=\frac{\sigma(n)}{e^{\gamma}\log \log n}$ are close to 0 on a set of asymptotic
density 1. An alternate version of Robin's inequality equivalent to Riemann Hypothesis was found by Lagarias\cite{Lagarias}, who showed the equivalence of the Riemann hypothesis to an sequence of elementary inequalities involving the harmonic numbers $H_n$, the sum of the reciprocals of the integers from 1 to $n$.
$$\sigma(n)\leq e^{H_n}\log{H_n}+H_n \mbox{~for~all} n \geq 1$$
Another alternate version of Robin's inequality is by Choie et.al \cite{Choie} who have shown that the RH holds true if and only if every natural number divisible by a fifth power greater than 1 satisfies Robin’s inequality. Briggs\cite{Brig} describe a computational study of the successive maxima of the relative sum-of-divisors function $\sigma(n)/n$. They found that the maxima of this function occur at superabundant and colossally abundant numbers and studied the density of these numbers. He then compared this with the known maximal order of $\frac{\sigma(n)}{e^{\gamma}\log \log n}$ and found out a condition equivalent to the Riemann Hypothesis using these data.

\begin{theorem} {\normalfont (Akbary\cite{Akba})}: If there is any counterexample to Robin's inequality then the least such counterexample is a superabundant number.
\end{theorem}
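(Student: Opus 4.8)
The plan is to argue by contradiction. Suppose Robin's inequality fails for at least one integer, and let $n$ denote the least counterexample, so that $n \geq 5041$ and $I(n) \geq e^{\gamma}\log\log n$. I would then assume, toward a contradiction, that $n$ is \emph{not} superabundant. By Definition 4.1 this means there exists some $m < n$ with $I(m) \geq I(n)$. The whole proof hinges on feeding this witness $m$ back into Robin's inequality.

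The key step uses the monotonicity of $\log\log$. Since $m < n$ and $t \mapsto \log\log t$ is strictly increasing for $t > 1$, we have $\log\log m < \log\log n$, and therefore
\[ I(m) \;\geq\; I(n) \;\geq\; e^{\gamma}\log\log n \;>\; e^{\gamma}\log\log m. \]
Thus $m$ itself violates Robin's inequality. If $m \geq 5041$, then $m$ is a counterexample strictly smaller than $n$, contradicting the minimality of $n$; this already closes the argument whenever the witness $m$ can be taken to be at least $5041$.

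The main obstacle is the boundary case $m < 5041$: by definition a counterexample refers only to integers $n \geq 5041$, so a small $m$ does not immediately contradict minimality. To handle this I would use that $5040$ is itself superabundant, so that $I(k) \leq I(5040)$ for every $k \leq 5040$. Combined with the displayed chain this gives $e^{\gamma}\log\log n \leq I(m) \leq I(5040)$, which forces $\log\log n \leq e^{-\gamma}\,I(5040)$ and hence bounds $n$ above by an explicit constant (approximately $5600$). Consequently the witness $m$ can fall below $5041$ only if $n$ itself lies in the finite window $5041 \leq n \leq 5600$; for any $n$ beyond this window one has $e^{\gamma}\log\log n > I(5040)$, so $I(m) \geq I(n) > I(5040) \geq I(k)$ for all $k \leq 5040$, which forces $m \geq 5041$ and recovers the immediate contradiction of the previous paragraph.

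It then remains only to dispose of the finite window $5041 \leq n \leq 5600$. Here I would appeal to a direct verification: for each such $n$ one computes $\sigma(n)/n$ and checks that it is strictly smaller than $e^{\gamma}\log\log n$, so that none of these integers is a counterexample. This is a routine finite check, comfortably inside the range over which Robin's inequality has already been confirmed, and it eliminates the remaining case. In every case the assumption that the least counterexample $n$ is not superabundant leads to a contradiction, so the least counterexample must be superabundant, as claimed.
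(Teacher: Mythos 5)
The paper itself contains no proof of this theorem --- it is quoted from Akbary and Friggstad \cite{Akba} --- so there is no in-house argument to compare against; measured against the cited source, your proposal is correct and is in substance exactly their argument. The core chain $I(m)\geq I(n)\geq e^{\gamma}\log\log n>e^{\gamma}\log\log m$ for a witness $m<n$ is the right mechanism; Akbary and Friggstad phrase it with $m$ taken to be the largest superabundant number $s\leq n$ (using that the maximum of $I$ on $\{1,\dots,n\}$ is attained at a superabundant number), whereas you take an arbitrary witness with $I(m)\geq I(n)$, but this works just as well since the chain needs nothing beyond $m<n$ and monotonicity of $\log\log$. Your handling of the boundary case $m<5041$ also matches theirs, and it rests on two external inputs that you should state explicitly rather than leave implicit: first, that $5040=2^4\cdot 3^2\cdot 5\cdot 7$ is superabundant --- a finite check, and not quotable from the paper, whose list of superabundant numbers stops at $180$ --- which yields $e^{\gamma}\log\log n\leq I(5040)=19344/5040\approx 3.8381$ and hence $n\leq e^{e^{e^{-\gamma}I(5040)}}\approx 5.6\times 10^{3}$; second, the numerical verification that Robin's inequality holds for all $n$ with $5041\leq n\leq 5600$, which is legitimate because the inequality has been confirmed computationally far beyond this window. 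With those two facts granted, every case closes and your proof is complete.
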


\begin{definition} Let \(S(x)\) be the number of superabundant numbers not exceeding \(x\).
\end{definition}

From \textbf{Theorem 4.1}, we get the inequality
$S(x)\geq \log x$, since the spacing grows at most exponentially. This gives $\log x$ as the lower bound to the counting function \(S(x)\). Note that \textbf{Theorem 4.4} helps us find a counterexample of the Robin's inequality by limiting our attention to only superabundant numbers. Unfortunately there is no algorithm find  superabundant numbers except finding it using \textbf{Definition 4.1}. Some results in the distribution of the superabundant numbers is therefore very helpful. We now state two results in that regard.

\begin{theorem} {\normalfont (Alaoglu\cite{Alag})}: \(S(x)>c\frac{\log x \log \log x}{(\log \log \log x)^2}\)  
\end{theorem}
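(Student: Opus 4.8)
The plan is to count superabundant numbers as the successive record values of the abundancy index $I$ and to beat the trivial estimate $S(x) \ge \log_2 x$ by exploiting the internal structure of these numbers. First I would recover the baseline: since $I(2n) > I(n)$ by Theorem 2, the least integer exceeding a superabundant number $n$ at which $I$ sets a new record is at most $2n$, so if $n_1 < n_2 < \cdots$ are the superabundant numbers then $n_{k+1} \le 2 n_k$, whence $S(x) \ge \log_2 x$. By Theorem 4.1 the total growth of $\log I$ across $[1,x]$ is only $\log\log\log x + O(1)$, so the records are densely packed; the doubling estimate is lossy precisely because between two successive doublings the index is re-broken as a record many times by tiny perturbations of the exponent vector. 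The task is to quantify how many such small record-setting steps occur.

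To do so I would invoke the Alaoglu--Erd\H{o}s normal form: every superabundant $n$ equals $\prod_{i=1}^{k} p_i^{a_i}$ over the first $k$ primes, with non-increasing exponents $a_1 \ge a_2 \ge \cdots \ge a_k \ge 1$, and its largest prime factor $p_k$ satisfies $p_k \sim \log n$. For $n \le x$ this gives $p_k \asymp \log x$, so $k = \pi(p_k) \asymp \log x / \log\log x$ (with $\pi$ the prime-counting function), while $a_2 = (1+o(1))\log_2 p_k \asymp \log\log x$. The superabundant numbers therefore trace a monotone path through the lattice of admissible exponent vectors, and $S(x)$ is the length of that path. The point is that this path is far longer than its diameter: the small primes contribute $\asymp \log\log x$ coordinates of range $\asymp \log\log x$, and advancing any one of them by a single unit multiplies $n$ by a factor $p_j > 1$ yet moves $\log I$ by only $\approx p_j^{-(a_j + 1)}$, so each such move can constitute a fresh record while barely consuming the available budget $\log\log\log x$.

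The key step is a lower bound on the number of genuine records, which I would obtain by grouping superabundant numbers according to their largest prime factor $p$ and estimating, for each admissible $p \asymp \log x$, how many distinct superabundant exponent vectors have largest prime $p$. Balancing the marginal gain $p_j^{-(a_j+1)}$ of incrementing the exponent of $p_j$ against the next available increment pins the exponents to $a_j = (1 + o(1))\log p_k / \log p_j$ and, after summing over the $\asymp \log x$ blocks lying between consecutive powers of $2$, is designed to deliver the stated factor $\frac{\log\log x}{(\log\log\log x)^2}$ on top of the baseline $\log x$.

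The hard part is exactly this quantitative count. One must show that the admissible single-unit perturbations really produce strictly increasing records --- not merely increase $I$ relative to the immediate predecessor --- which forces one to use the sharp Alaoglu--Erd\H{o}s relations $a_j = (1+o(1)) \log p_k / \log p_j$ together with prime-counting input, uniformly over all $p \asymp \log x$. It is the error terms in the prime number theorem, transmitted through the discreteness of the exponents (one cannot seat an exponent at its non-integer optimum), that shrink each gain and are responsible for the denominator $(\log\log\log x)^2$; making this loss precise and uniform is the delicate part, while the remainder of the argument is bookkeeping.
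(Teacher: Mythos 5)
The paper itself gives no proof of this theorem --- it is quoted from Alaoglu and Erd\H{o}s \cite{Alag} --- so your attempt must be measured against the known argument, and against the standard it falls short: it is a plan, not a proof. The entire content of the theorem is the quantitative count of records, and you explicitly defer it (``the hard part is exactly this quantitative count''); nothing in the sketch actually produces the factor $\frac{\log\log x}{(\log\log\log x)^2}$. The structural inputs you invoke --- superabundant $n$ factors over an initial segment of primes with nonincreasing exponents, $p_k\sim\log n$, and the balance relations $a_j=(1+o(1))\log p_k/\log p_j$ --- are genuine Alaoglu--Erd\H{o}s facts, but they are themselves nontrivial theorems you would have to prove, and even granting them you never exhibit the count. (A small slip as well: the budget $\log I(n)=\log\log\log n+O(1)$ comes from Theorem 4.2 (Robin) or Gronwall's theorem, not from Theorem 4.1, which only asserts infinitude.)

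More seriously, you have misidentified the key lemma and thereby set yourself a harder task than the theorem requires. You insist one must show that the single-unit exponent perturbations ``really produce strictly increasing records''; that is both unnecessary and probably false as stated, since most perturbed exponent vectors are not superabundant. The correct reduction --- the one underlying the Alaoglu--Erd\H{o}s bound --- is a gap estimate for consecutive superabundant numbers: for superabundant $n$ it suffices to construct \emph{any} integer $m$ with $I(m)>I(n)$ and $m/n\le 1+O\bigl((\log\log\log n)^2/\log\log n\bigr)$, for instance $m=nq^{j}/p$ with $p$ the largest prime factor of $n$ and $q^{j}$ slightly exceeding $p$, using the balance relations to verify that the gain $\approx q^{-(a_q+1)}$ per increment beats the loss $\approx 1/p$; then the \emph{least} $N\in(n,m]$ with $I(N)>I(n)$ is automatically superabundant, because every $j<N$ satisfies $I(j)\le I(n)<I(N)$ either by superabundance of $n$ or by minimality of $N$. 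This is precisely the least-element device in the paper's own proof of Theorem 4.1, and the one you yourself used with $m=2n$ to get the baseline $S(x)\ge\log_2 x$ --- you simply failed to redeploy it at the refined scale. With the gap bound in hand, climbing from $\sqrt{x}$ to $x$ forces at least $c\log x\cdot\frac{\log\log x}{(\log\log\log x)^2}$ records, which is the theorem; without it, and without the explicit construction of $m$, your sketch contains no proof of the central estimate, only a correct heuristic for where the $(\log\log\log x)^2$ loss (the integrality of the exponents) comes from.
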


\noindent Erdős and Nicholas \cite{Nich} proved a more stronger inequality.

\begin{theorem} {\normalfont (Nicholas\cite{Nich})}: $S(x)>(\log x)^{1+\delta}~(x>x_0)~for~every~ \delta<5/48$. 
\end{theorem}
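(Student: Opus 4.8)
The plan is to view the superabundant numbers $n_1<n_2<n_3<\cdots$ as exactly the left-to-right record values of the abundancy index $I$, and to estimate $S(x)$ through the multiplicative gaps between consecutive records. Since $n_{S(x)}$ is the largest superabundant number not exceeding $x$,
\[ \log n_{S(x)} = \log n_1 + \sum_{k=1}^{S(x)-1}\log\frac{n_{k+1}}{n_k} \le \log x, \]
so $S(x)$ is large precisely when the ratios $n_{k+1}/n_k$ are, on average, close to $1$. The baseline $S(x)\ge \log x$ recorded after Theorem 4.1 corresponds to the crude estimate $n_{k+1}/n_k\le 2$, which follows because $I(2n_k)>I(n_k)\ge I(m)$ for all $m\le n_k$ by Theorem 2, forcing the next record into $(n_k,2n_k]$. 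To beat this by a power of $\log x$ I would have to show that the overwhelming majority of these ratios are in fact far below a fixed constant and shrink toward $1$ as $n_k\to\infty$.

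To manufacture many records with tiny gaps I would invoke the Alaoglu--Erd\H{o}s structural description: if $n=\prod_i p_i^{a_i}$ with $p_1<p_2<\cdots<p_r$ is superabundant, then the exponents are nonincreasing, $a_r=1$ save for finitely many small $n$, and the largest prime satisfies $p_r\sim\log n$. Fixing a scale $y\le x$ with threshold prime $P\approx\log y$, the idea is to build a long chain of superabundant numbers inside a single multiplicative block $[y,\,Cy]$ by adjusting the factorization one prime at a time: incrementing or swapping the prime powers attached to primes $q$ in a suitable window, each move multiplying $n$ by a rational of shape $\frac{q^{a+1}}{q^{a}}\cdot(\text{correction})$ that is $1+o(1)$ when $q$ is large. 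The order in which these candidates actually become new maxima of $I$ is dictated by comparing the \emph{efficiencies} $\frac{\log(I(n')/I(n))}{\log(n'/n)}$ across the available primes, which I would control through the explicit bounds for $I(p^{\alpha})$ established in Theorem 5; prime-counting estimates then let me count how many of the candidates genuinely set a record.

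The decisive step is to choose the prime window and the pattern of exponent increments so as to maximize the number of admissible records while \emph{certifying} that each constructed integer really is a record, i.e.\ that no smaller integer has a larger index. This global certification is the main obstacle: it demands a uniform comparison of $I$ over all smaller integers, which rests on sharp information about the distribution of primes near $P\approx\log y$ (sums $\sum_{q\le t}1/\log q$ and the spacing of consecutive primes). Carrying the count through a dyadic decomposition $y=x,x^{1/2},x^{1/4},\dots$ and summing the contributions, I expect the number of certified records up to $x$ to exceed $(\log x)^{1+\delta}$, with the admissible exponent emerging from optimizing the trade-off between the length of the prime window and the size of the increments; the arithmetic of that optimization is what pins the threshold at $\delta<5/48$, and pushing the prime estimates to their sharp form is where the real work lies.
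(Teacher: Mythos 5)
The paper itself offers no proof of this theorem --- it is stated with attribution to Erd\H{o}s and Nicolas \cite{Nich} --- so the only question is whether your outline could be completed into one, and as written it cannot: both decisive steps are deferred rather than carried out. The step you yourself label ``the main obstacle,'' certifying that each constructed integer is a genuine record of $I$ against \emph{all} smaller integers, is exactly where the content of the theorem lives, and nothing in your sketch supplies it. Comparing efficiencies $\log\bigl(I(n')/I(n)\bigr)/\log(n'/n)$ across candidate prime moves only orders the members of your own chain; it does not exclude an integer of a different factorization shape, outside the chain, having a larger index. The device that actually delivers this global control in the Erd\H{o}s--Nicolas argument is the family of colossally abundant numbers: an integer maximizing $\sigma(n)/n^{1+\varepsilon}$ is superabundant with the global maximality built in by definition, and one then counts superabundant numbers interpolating between consecutive colossally abundant numbers, where the two-sided pinning by CA neighbors replaces the uniform comparison you flag as intractable. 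Your proposal never introduces any object with such built-in extremality, so the certification gap is a missing idea, not unfinished bookkeeping.

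The threshold $5/48$ is likewise asserted, not derived: it is not the generic output of ``optimizing window length against increment size,'' but the trace of a specific analytic input, namely a primes-in-short-intervals theorem of Huxley type, $\pi(t+t^{\theta})-\pi(t)\gg t^{\theta}/\log t$ with $\theta=7/12$, which controls how many admissible swaps of the largest prime factor (of size about $\log n$, as you correctly note from Alaoglu--Erd\H{o}s) are available; the admissible range $\delta<5/48=(1-7/12)/4$ records precisely this exponent. Any honest completion of your plan would have to name that theorem and show how the count of interpolating superabundant numbers inherits its exponent; absent that, the sketch has no mechanism that could pin the constant $5/48$ at all. (Your baseline observation is fine: Theorem 2 of the paper does force the next record into $(n_k,2n_k]$, giving $S(x)\geq\log_{2}x$, consistent with the remark following Theorem 4.1.)
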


So we finally see that abundancy index and superabundant numbers have a very close connection with Riemann Hypothesis. One may try to prove or disprove Riemann Hypothesis with the help of \textbf{Theorem 4.3}. To disprove Riemann's Hypothesis it enough to get a counterexample to Robin's inequality, one might try to find it computationally and \textbf{Theorem 4.4} will definitely make his or her job easier. 
\bigskip


\begin{thebibliography}{}
\bibitem{Euler}
Euler, Leonhard (1849), "De numeris amicibilibus" [On amicable numbers], Commentationes arithmeticae (in Latin), 2, pp. 627–636.
\bibitem{Gerstein}
Gerstein, Larry (2012), Introduction to Mathematical Structures and Proofs, Undergraduate Texts in Mathematics, Springer, Theorem 6.94, p. 339, ISBN 9781461442653.
\bibitem{Caldwell}
 Caldwell, Chris K., "A proof that all even perfect numbers are a power of two times a Mersenne prime", Prime Pages, retrieved 2014-12-02.
\bibitem{Travaglini}
 Travaglini, Giancarlo (2014), Number Theory, Fourier Analysis and Geometric Discrepancy, London Mathematical Society Student Texts, 81, Cambridge University Press, pp. 26–27, ISBN 9781107044036.
 \bibitem{Kanold}
 Kanold, H.-J. "Über mehrfach vollkommene Zahlen. II." J. reine angew. Math. 197, 82-96, 1957.
 \bibitem{Tuckerman}
Tuckerman, B. "Odd Perfect Numbers: A Search Procedure, and a New Lower Bound of $10^(36)$." Not. Amer. Math. Soc. 15, 226, 1968.
 \bibitem{Hagis}
Hagis, P. Jr. "A Lower Bound for the Set of Odd Perfect Numbers." Math. Comput. 27, 951-953, 1973.
  \bibitem{Brent1}
Brent, R. P. and Cohen, G. L. "A New Bound for Odd Perfect Numbers." Math. Comput. 53, 431-437 and S7-S24, 1989.
  \bibitem{Brent2}
Brent, R. P.; Cohen, G. L.; te Riele, H. J. J. "Improved Techniques for Lower Bounds for Odd Perfect Numbers." Math. Comput. 57, 857-868, 1991.
\bibitem{Ochem}
 Ochem, P. and Rao, M. "Odd Perfect Numbers Are Greater than $10^(15000)$." Math. Comput. 81, 1869-1877, 2012.
\bibitem{Burton}
Burton, D. M. Elementary Number Theory, 4th ed. Boston, MA: Allyn and Bacon, 1989.
\bibitem{Weisstein}
Weisstein, Eric W. "Fermat's 4n+1 Theorem." From MathWorld--A Wolfram Web Resource. https://mathworld.wolfram.com/Fermats4nPlus1Theorem.html
\bibitem{Touchard}
Touchard, J. "On Prime Numbers and Perfect Numbers." Scripta Math. 19, 35-39, 1953.
\bibitem{Holdener}
Holdener, J. A. "A Theorem of Touchard and the Form of Odd Perfect Numbers." Amer. Math. Monthly 109, 661-663, 2002.
\bibitem{Dickson}
Dickson, L. E. History of the Theory of Numbers, Vol. 1: Divisibility and Primality. New York: Dover, pp. 3-33, 2005.
\bibitem{Greathouse}
Greathouse, Charles and Weisstein, Eric W. "Odd Perfect Number." From MathWorld--A Wolfram Web Resource. https://mathworld.wolfram.com/OddPerfectNumber.html
\bibitem{Steuerwald}
Steuerwald, R. "Verscharfung einen notwendigen Bedingung fur die Existenz einen ungeraden vollkommenen Zahl." Sitzungsber. Bayer. Akad. Wiss., 69-72, 1937.
\bibitem{Yamada}
Yamada, T. "On the Divisibility of Odd Perfect Numbers by a High Power of a Prime." 16 Nov 2005. https://arxiv.org/abs/math.NT/0511410.
\bibitem{Ball}
Ball, W. W. R. and Coxeter, H. S. M. Mathematical Recreations and Essays, 13th ed. New York: Dover, 1987.
\bibitem{Haggis1}
Hagis, P. Jr. "An Outline of a Proof that Every Odd Perfect Number has at Least Eight Prime Factors." Math. Comput. 34, 1027-1032, 1980.
\bibitem{Voight}
Voight, J. "On the Nonexistence of Odd Perfect Numbers." MASS Selecta. Providence, RI: Amer. Math. Soc., pp. 293-300, 2003.
\bibitem{Norton}
Norton, K. K. "Remarks on the Number of Factors of an Odd Perfect Number." Acta Arith. 6, 365-374, 1960.
\bibitem{Neilsen}
Nielsen, P. P. "Odd Perfect Numbers Have at Least Nine Distinct Prime Factors." 22 Feb 2006. https://arxiv.org/abs/math.NT/0602485.
\bibitem{Hare}
Hare, K. "New Techniques for Bounds on the Total Number of Prime Factors of an Odd Perfect Number." Math. Comput. 74, 1003-1008, 2005.
\bibitem{Haggis2}
Hagis, P. Jr.; and Cohen, G. L. "Every Odd Perfect Number Has a Prime Factor Which Exceeds $10^6$." Math. Comput. 67, 1323-1330, 1998.
\bibitem{Iannucci1}
Iannucci, D. E. "The Second Largest Prime Divisor of an Odd Perfect Number Exceeds Ten Thousand." Math. Comput. 68, 1749-1760, 1999.
\bibitem{Iannucci2}
Iannucci, D. E. "The Third Largest Prime Divisor of an Odd Perfect Number Exceeds One Hundred." Math. Comput. 69, 867-879, 2000.
\bibitem{Jenkins}
Jenkins, P. M. "Odd Perfect Numbers Have a Prime Factor Exceeding $10^7$." Math. Comput. 72, 1549-1554, 2003.
\bibitem{Goto}
Goto, T. and Ohno, Y. "Odd Perfect Numbers Have a Prime Factor Exceeding $10^8$" Preprint, Mar. 2006. https://www.ma.noda.tus.ac.jp/u/tg/perfect.html.
\bibitem{Laat}
Laatsch, R. (1986). Measuring the Abundancy of Integers. Mathematics Magazine, 59(2), 84-92. 
\bibitem{Euler1}
 Euler, Leonhard (1737). "Variae observationes circa series infinitas" [Various observations concerning infinite series]. Commentarii Academiae Scientiarum Petropolitanae. 9: 160–188.
\bibitem{Wein}
Weiner, P. (2000). The Abundancy Ratio, a Measure of Perfection. Mathematics Magazine, 73(4), 307-310. 
\bibitem{Anderson}
C. W. Anderson, The solution of $\sum(n) =\sigma(n)/n=a/b,\Phi(n) =\phi(n)/n=a/b$ and some related considerations, unpublished manuscript, 1974
\bibitem{Hold}
Stanton, William and Holdener, Judy. (2007). Abundancy “outlaws” of the form \(\frac{\sigma(N)+t}{N}\). Journal of Integer Sequences [electronic only].
\bibitem{Hold1}
J. Holdener, Conditions equivalent to the existence of oddperfect numbers,Math. Mag.79(2006), 389–391
\bibitem{Ryan}
R. Ryan, Results concerning uniqueness for $\sigma(x)/x = \sigma(p^n*q^m)/(p^n*q^m)$ and related topics, Int. Math. J. 2 (2002), 497–514.
\bibitem{Alag}
Alaoglu, L., and Erdos, P. (1944). On Highly Composite and Similar Numbers. Transactions of the American Mathematical Society, 56(3), 448-469. 
\bibitem{Rama1}
S. Ramanujan, Highly composite numbers, Proc. Lond. Math. Soc. 14 (1915), 347–407.
\bibitem{Rama2}
S. Ramanujan, Collected Papers, Chelsea, 1962.
\bibitem{Rama3}
S. Ramanujan (annotated by J.-L. Nicolas and G. Robin), Highly composite numbers,
Ramanujan J. 1 (1997), 119–153. 
\bibitem{NSloane}
 Sloane, N. J. A. (ed.). "Sequence A001620 (Decimal expansion of Euler's constant (or the Euler-Mascheroni constant), gamma)". The On-Line Encyclopedia of Integer Sequences. OEIS Foundation.
 \bibitem{Gronwall}
T. H. Gronwall, Some asymptotic expressions in the theory of numbers, Trans. Amer.
Math. Soc. 14 (1913), 113–122.
\bibitem{Wojtowicz}
Wojtowicz, Marek. (2007). Robin's inequality and the Riemann hypothesis. Proceedings of the Japan Academy, Series A, Mathematical Sciences. 83. 10.3792/pjaa.83.47. 
\bibitem{Lagarias}
 J. C. Lagarias, An elementary problem equivalent to the Riemann hypothesis, American Mathematical Monthly 109 (2002), pp. 534–543.
\bibitem{Choie}
Y. Choie, N. Lichiardopol, P. Moree, and P. Sol´e, On Robin’s criterion for the Riemann
hypothesis, J. Th´eor. Nombres Bordeaux 19 (2007), 357–372.
\bibitem{Akba}
Akbary, A., and Friggstad, Z. (2009). Superabundant Numbers and the Riemann Hypothesis. The American Mathematical Monthly, 116(3), 273-275. 
\bibitem{Nich}
Erdős, P. and Nicolas, J.-L. R´epartition des nombres superabondants, Bull. Soc. Math. France 103 (1975) 65–90.
\bibitem{Riemann}
Riemann, Bernhard (1859), "Ueber die Anzahl der Primzahlen unter einer gegebenen Grösse", Monatsberichte der Berliner Akademie. In Gesammelte Werke, Teubner, Leipzig (1892)
\bibitem{Robi}
Robin, G . Grandes valeurs de la fonction somme de diviseurs et hypoth`ese de Riemann,J. Math. Pure Appl. (9) 63 (1984) 187–213.
\bibitem{Brig}
Briggs,K.  Abundant numbers and the Riemann hypothesis. Experiment. Math. 15 (2006),251–256.
\end{thebibliography}
\end{document}